\numberwithin{equation}{section}
\titleformat{\section}[block]{\bfseries\filcenter}
{{\upshape\thesection\enspace}}{.5em}{}
\titleformat{\subsection}[block]{\filcenter}
{{\upshape\thesubsection\enspace}}{.5em}{} 
\titleformat{\subsubsection}[block]{\filcenter}
{{\upshape\thesubsubsection\enspace}}{.5em}{} 
\setlist{nosep}  
\newcommand{\N}{\mathbb{N}}     
\newcommand{\R}{\mathbb{R}}     
\newcommand{\Prob}{\mathbb{P}}  
\newcommand{\Exp}{\mathbb{E}}   
\newcommand{\goth}[1]{\mathfrak{#1}} 
\newcommand{\ind}[2]{\mathbbm{1}_{#1}\left( #2 \right)}          
\newcommand{\inner}[2]{\left\langle #1 \, , \, #2 \right\rangle} 
\newcommand{\norm}[1]{\left|\left|#1\right|\right|}              
\newcommand{\triplet}[3]{\left( #1, #2, #3 \right) }             
\newcommand{\ProbSpace}{\triplet{\Omega}{\mathscr{F}}{\Prob}}    
\newcommand{\abs}[1]{\left| #1 \right|}                          
\newcommand{\defeq}{\mathrel{\mathop:}=}                         
\newcommand\restr[2]{{
  \left.\kern-\nulldelimiterspace 
  #1 
  \vphantom{\big|} 
  \right|_{#2} 
  }}
\newsavebox{\@brx}
\newcommand{\llangle}[1][]{\savebox{\@brx}{\(\m@th{#1\langle}\)}%
  \mathopen{\copy\@brx\kern-0.5\wd\@brx\usebox{\@brx}}}
\newcommand{\rrangle}[1][]{\savebox{\@brx}{\(\m@th{#1\rangle}\)}%
  \mathclose{\copy\@brx\kern-0.5\wd\@brx\usebox{\@brx}}}
\theoremstyle{plain} 
\newtheorem{theorem}{Theorem}[section]
\newtheorem{lemma}[theorem]{Lemma}
\newtheorem{assumption}[theorem]{Assumption}
\theoremstyle{definition} 
\newtheorem{example}[theorem]{Example}
 \title{Time regularity of stochastic convolutions and stochastic evolution equations in duals of nuclear spaces}
\author{C. A. Fonseca-Mora}
\affil{  Escuela de Matem\'{a}tica,  Universidad de Costa Rica,\\ San Jos\'{e}, 11501-2060, Costa Rica. \\

\noindent E-mail:  christianandres.fonseca@ucr.ac.cr }
\date{}
\begin{document}

 \maketitle

\abstract{ 
Let $\Phi$ a locally convex space and $\Psi$ be a quasi-complete, bornological, nuclear space  (like spaces of smooth functions and distributions) with dual spaces $\Phi'$ and $\Psi'$. In this work we introduce sufficient conditions for time regularity properties of the $\Psi'$-valued stochastic convolution $\int_{0}^{t} \int_{U} S(t-r)'R(r,u) M(dr,du)$, $t \in [0,T]$, where $(S(t): t \geq 0)$ is a $C_{0}$-semigroup on $\Psi$, $R(r,\omega,u)$ is a suitable operator form $\Phi'$ into $\Psi'$ and $M$ is a  cylindrical-martingale valued measure on $\Phi'$. Our result is latter applied to study time regularity of solutions to $\Psi'$-valued stochastic evolutions equations. 
}

\smallskip

\emph{2020 Mathematics Subject Classification:} 60G17, 60H05, 60H15, 60G20. %

\emph{Key words and phrases:} cylindrical martingale-valued measures; dual of a nuclear space; stochastic convolution; stochastic evolution equations. 

\section{Introduction}

Let $\Phi$ be a locally convex space with strong dual $\Phi'$. Let $M=(M(t,A): t \geq 0, A \in \mathcal{R})$ be a cylindrical martingale-valued measure on $\Phi'$, i.e. $(M(t,A): t \geq 0)$ is a cylindrical martingale in $\Phi'$ for each $A \in \mathcal{R}$ and $M(t,\cdot)$ is finitely additive on a $\mathcal{R}$ for each $t \geq 0$. Here $\mathcal{R} \subseteq \mathcal{B}(U)$ is a ring and $U$ a topological space. 
Moreover, let $\Psi$ be a quasi-complete bornological nuclear space, $Z_{0}$ a $\Psi'$-valued random variable, and let $A$ be the generator of a $S_{0}$-semigroup $(S(t): t \geq 0)$ of continuous linear operators on $\Psi$. In this paper we study time regularity of solutions to the following class of stochastic evolution equations 
\begin{equation} \label{generalSEEIntro}
d X_{t}= (A'X_{t}+ B (t,X_{t})) dt+\int_{U} F(t,u,X_{t}) M (dt,du), 
\end{equation}
with initial value $X_{0}=Z_{0}$, where $A'$  is the dual operator to $A$ and the coefficients $B$ and $F$ satisfy appropriate measurability conditions. 

In \cite{FonsecaMora:2018-1}, sufficient growth and Lipschitz  conditions on the coefficients has been introduced to show existence and uniqueness of a weak solution to \eqref{generalSEEIntro}. In particular, this weak solution is given by the mild or evolution solution: for every $ t \geq 0$, $\Prob$-a.e.
\begin{equation} \label{equationMildSolutionIntro}
X_{t} = S(t)'Z_{0}+ \int^{t}_{0} S(t-r)' B(r,X_{r})dr  +  \int^{t}_{0} \int_{U} S(t-r)' F(r,u,X_{r}) M(dr,du).
\end{equation}
The second integral at the right-hand side of \eqref{equationMildSolutionIntro} is the stochastic convolution of the dual semigroup $(S(t)': t \geq 0)$ and the coefficient $F$. This stochastic integral is defined by means of the theory of (strong) stochastic integration with respect to the cylindrical martingale-valued measure $M$ as defined in \cite{FonsecaMora:2018-1}. 

In order  to show that the mild solution \eqref{equationMildSolutionIntro} has a continuous or a c\`{a}dl\`{a}g version, we will require to show first that for $R$ integrable with respect to $M$ (see Section \ref{subsecStocInteg}), the stochastic convolution  $ \int^{t}_{0} \int_{U} S(t-r)' R(r,u) M(dr,du)$ possesses a continuous or a c\`{a}dl\`{a}g version under mild assumptions on the $C_{0}$-semigroup. 

To be more specific, let  us assume $( S(t) : t \geq 0)$ is a $(C_{0},1)$-semigroup of continuous linear operators on $\Psi$ (see \cite{Babalola:1974}). According to Theorem 2.8 in \cite{Babalola:1974} there exists a family $\Pi$ of seminorms  generating the topology on $\Psi$ such that for each $p \in \Pi$ there exist $\theta_{p} \geq 0$ such that 
\begin{equation}\label{eqDefC01Semigroup}
 p(S(t)\psi) \leq e^{\theta_{p} t} p(\psi), \quad \mbox{for all } t \geq 0, \, \psi \in \Psi.
\end{equation}
In this work we will say that  $(S(t):t\geq 0)$ is a \emph{Hilbertian $(C_{0},1)$-semigroup} if such a family $\Pi$ can be chosen such that each $p \in \Pi$ is a Hilbertian seminorm. 

In Section \ref{sectRegulaPathsStochConvo}  we will show (see Theorem \ref{theoCadlagVerStochConv}) that if $(S(t):t\geq 0)$ is a Hilbertian $(C_{0},1)$-semigroup then the stochastic convolution  $ \int^{t}_{0} \int_{U} S(t-r)' R(r,u) M(dr,du)$ possesses a continuous or a c\`{a}dl\`{a}g version if either $M$ has respectively (as a cylindrical process) continuous or c\`{a}dl\`{a}g paths. We will indeed show that this version 
takes values in a Hilbert space $\Psi'_{p}$ continuously embedded in $\Psi'$.

In the context of a Hilbert space with norm $p$ a $C_{0}$-semigroup satisfying \eqref{eqDefC01Semigroup} is often referred to as \emph{exponentially bounded} and it is known from the work of Kotelenez \cite{Kotelenez:1982} that the stochastic convolution for such a semigroup and with respect to a Hilbert-space valued square integrable martingale possesses a continuous or a c\`{a}dl\`{a}g. The work we carried out in Section \ref{sectRegulaPathsStochConvo} can be thought a natural extension of the work of Kotelenez to the context of stochastic convolution in the dual of a nuclear space. 

To the extent of our knowledge only the works \cite{FonsecaMora:SSELevy, PerezAbreuTudor:1992} studied time regularity of stochastic convolutions with values in the dual of a nuclear space. In each of these works the integrand $R$ is trivial (i.e. the identity operator) and the convolution is defined with respect to a $\Phi'$-valued processes for a reflexive nuclear space $\Phi$. Below we briefly describe the results in \cite{FonsecaMora:SSELevy, PerezAbreuTudor:1992}. 

Time regularity of stochastic convolutions of the form $\int_{0}^{t} S(t-r)' dM_{r}$ has been studied in  (\cite{PerezAbreuTudor:1992}, Theorem 4.1)  under the assumption that $\Psi=\Phi$ is a Fr\'{e}chet nuclear space and $M$ is a $\Psi'$-valued square integrable martingale. In \cite{PerezAbreuTudor:1992} it is not explicitly assumed that $(S(t):t\geq 0)$ is a Hilbertian $(C_{0},1)$-semigroup but this property is used on their proofs. Indeed, it  is a common practice in the literature of stochastic analysis in duals of nuclear spaces that a $(C_{0},1)$-semigroup is assumed to be Hilbertian (see for example  \cite{Ding:1999, Wu:1994, Wu:1995}). 

In  (\cite{FonsecaMora:SSELevy}, Theorem 5.7) and under the assumption that both $\Phi$ and $\Phi$ are quasi-complete bornological nuclear spaces and $M$ is a $\Psi'$-valued L\'{e}vy process, it is shown that if the generator $A$ of $(S(t):t\geq 0)$  is a continuous operator, then $\int_{0}^{t} S(t-r)' dM_{r}$ has a c\`{a}dl\`{a}g version. It is worth to mention that in \cite{FonsecaMora:SSELevy} it is not assumed that the $(C_{0},1)$-semigroup $(S(t):t\geq 0)$ is Hilbertian. 
 
Now in Section \ref{sectTimeReguSEE} we return to our study of time regularity of solutions to stochastic evolution equations. Indeed, by applying our aforementioned result on time regularity of stochastic convolutions we show (see Theorem \ref{theoCadlagSolSEEwithCMVM}) that under some growth and Lipschitz conditions introduced in \cite{FonsecaMora:2018-1} the solution \eqref{equationMildSolutionIntro} to \eqref{generalSEEIntro} has a unique continuous or c\`{a}dl\`{a}g version taking values and having square moments in a Hilbert space $\Psi'_{p}$ continuously embedded in $\Psi'$ for each bounded interval of time.  

Finally, in Section \ref{sectAppliExample} we provide some examples and applications of Hilbertian $(C_{0},1)$-semigroups and of stochastic evolution equations for which our results can be applied to show the existence of continuous or c\`{a}dl\`{a}g solutions. In particular, equations driven by L\'{e}vy noise will be considered.

\section{Preliminaries}\label{sectPrelim}

\subsection{Locally convex spaces and linear operator}

Let $\Phi$ be a locally convex space (we will only consider vector spaces over $\R$). $\Phi$ is \emph{quasi-complete} if each bounded and closed subset of it is complete. $\Phi$ is called  \emph{bornological} (respectively \emph{ultrabornological}) if it is the inductive limit of a family of normed (respectively Banach)  spaces. A \emph{barreled space} is a locally convex space such that every convex, balanced, absorbing and closed subset is a neighborhood of zero. For further details see \cite{Jarchow, Schaefer, Treves}.   

If $p$ is a continuous semi-norm on $\Phi$ and $r>0$, the closed ball of radius $r$ of $p$ given by $B_{p}(r) = \left\{ \phi \in \Phi: p(\phi) \leq r \right\}$ is a closed, convex, balanced neighborhood of zero in $\Phi$. A continuous seminorm (respectively norm) $p$ on $\Phi$ is called \emph{Hilbertian} if $p(\phi)^{2}=Q(\phi,\phi)$, for all $\phi \in \Phi$, where $Q$ is a symmetric, non-negative bilinear form (respectively inner product) on $\Phi \times \Phi$. For any given continuous seminorm $p$ on $\Phi$ let $\Phi_{p}$ be the Banach space that corresponds to the completion of the normed space $(\Phi / \mbox{ker}(p), \tilde{p})$, where $\tilde{p}(\phi+\mbox{ker}(p))=p(\phi)$ for each $\phi \in \Phi$. If $\Phi_{p}$ is separable then we say that $p$ is \emph{separable}. We denote by  $\Phi'_{p}$ the Banach space dual to $\Phi_{p}$ and by $p'$ the corresponding dual norm. Observe that if $p$ is Hilbertian then $\Phi_{p}$ and $\Phi'_{p}$ are Hilbert spaces. If $q$ is another continuous seminorm on $\Phi$ for which $p \leq q$, we have that $\mbox{ker}(q) \subseteq \mbox{ker}(p)$ and the inclusion map from $\Phi / \mbox{ker}(q)$ into $\Phi / \mbox{ker}(p)$ has a unique continuous and linear extension that we denote by $i_{p,q}:\Phi_{q} \rightarrow \Phi_{p}$. Furthermore, we have the following relation: $i_{p}=i_{p,q} \circ i_{q}$.

Let $p$ and $q$ be continuous Hilbertian semi-norms on $\Phi$ such that $p \leq q$.
The space of continuous linear operators (respectively Hilbert-Schmidt operators) from $\Phi_{q}$ into $\Phi_{p}$ is denoted by $\mathcal{L}(\Phi_{q},\Phi_{p})$ (respectively $\mathcal{L}_{2}(\Phi_{q},\Phi_{p})$) and the operator norm (respectively Hilbert-Schmidt norm) is denote by $\norm{\cdot}_{\mathcal{L}(\Phi_{q},\Phi_{p})}$ (respectively $\norm{\cdot}_{\mathcal{L}_{2}(\Phi_{q},\Phi_{p})}$).

We denote by $\Phi'$ the topological dual of $\Phi$ and by $\inner{f}{\phi}$ the canonical pairing of elements $f \in \Phi'$, $\phi \in \Phi$. Unless otherwise specified, $\Phi'$ will always be consider equipped with its \emph{strong topology}, i.e. the topology on $\Phi'$ generated by the family of semi-norms $( \eta_{B} )$, where for each $B \subseteq \Phi$ bounded we have $\eta_{B}(f)=\sup \{ \abs{\inner{f}{\phi}}: \phi \in B \}$ for all $f \in \Phi'$.  Recall that $\Phi$ is called \emph{semi-reflexive} if the canonical (algebraic) embedding of $\Phi$ into $\Phi''$ is onto, and is called \emph{reflexive} if the canonical embedding is indeed an isomorphism (of topological vector spaces).


Let us recall that a (Hausdorff) locally convex space $(\Phi,\mathcal{T})$ is called \emph{nuclear} if its topology $\mathcal{T}$ is generated by a family $\Pi$ of Hilbertian semi-norms such that for each $p \in \Pi$ there exists $q \in \Pi$, satisfying $p \leq q$ and the canonical inclusion $i_{p,q}: \Phi_{q} \rightarrow \Phi_{p}$ is Hilbert-Schmidt. Other equivalent definitions of nuclear spaces can be found in \cite{Pietsch, Treves}. Recall that any quasi-complete, bornological, nuclear space is barreled and semi-reflexive, therefore is reflexive (see Theorem IV.5.6 in \cite{Schaefer}, p.145).

The following are all examples of complete, ultrabornological (hence barrelled) nuclear spaces: 
the spaces of functions $\mathscr{E}_{K} \defeq \mathcal{C}^{\infty}(K)$ ($K$: compact subset of $\R^{d}$) and $\mathscr{E}\defeq \mathcal{C}^{\infty}(\R^{d})$, the rapidly decreasing functions $\mathscr{S}(\R^{d})$, and the space of test functions $\mathscr{D}(U) \defeq \mathcal{C}_{c}^{\infty}(U)$ ($U$: open subset of $\R^{d}$), as well are the spaces of distributions $\mathscr{E}'_{K}$, $\mathscr{E}'$, $\mathscr{S}'(\R^{d})$, and $\mathscr{D}'(U)$. Other examples are the space of harmonic functions $\mathcal{H}(U)$ ($U$: open subset of $\R^{d}$), the space of polynomials $\mathcal{P}_{n}$ in $n$-variables and the space of real-valued sequences $\R^{\N}$ (with direct product topology). For references see \cite{Pietsch, Schaefer, Treves}. 

Let $\Psi$ a locally convex space. 
A family $( S(t): t \geq 0) \subseteq \mathcal{L}(\Psi,\Psi)$  is  a \emph{$C_{0}$-semigroup} on $\Psi$ if \begin{inparaenum}[(i)] \item $S(0)=I$, $S(t)S(s)=S(t+s)$ for all $t, s \geq 0$, and \item $\lim_{t \rightarrow s}S(t) \phi = S(s) \psi$, for all $s \geq 0$ and any $\psi \in \Psi$. 
\end{inparaenum} The \emph{infinitesimal generator} $A$ of  $( S(t) : t \geq 0)$ is the linear operator on $\Psi$ defined by $ A \psi = \lim_{h \downarrow 0} \frac{S(h) \psi -\psi}{h}$ (limit in $\Psi$), whenever the limit exists; the domain of $A$ being the set $\mbox{Dom}(A) \subseteq \Psi$ for which the above limit exists.
If the space $\Psi$ is reflexive, then the family $( S(t)' : t \geq 0)$ of  dual operators is a $C_{0}$-semigroup on $\Psi'$ with generator $A'$, that we call the \emph{dual semigroup} and the \emph{dual generator} respectively.

A $C_{0}$-semigroup $( S(t) : t \geq 0)$ is called a \emph{$(C_{0},1)$-semigroup} if for each continuous seminorm $p$ on $\Psi$  there exists some $\vartheta_{p} \geq 0$ and a continuous seminorm $q$ on $\Psi$ such that  $ p(S(t)\psi) \leq e^{\vartheta_{p} t} q(\psi)$, for all $t \geq 0$ and $\psi \in \Psi$. If in the above inequality $\vartheta_{p}=\omega$ with $\omega$ a positive constant (independent of $p$) $( S(t) : t \geq 0)$ is called \emph{quasiequicontinuous}, and if $\omega=0$ $( S(t) : t \geq 0)$ is called \emph{equicontinuous}. It is worth to mention that even if $\Psi$ is reflexive, the dual semigroup $( S(t)' : t \geq 0)$ to a $(C_{0},1)$-semigroup $( S(t) : t \geq 0)$  is not in general a $(C_{0},1)$-semigroup on $\Psi'$ (see  \cite{Babalola:1974}, Section 6). However,  if $( S(t) : t \geq 0)$ is equicontinous and $\Psi$ is reflexive we have that $( S(t)' : t \geq 0)$ is equicontinuous (see \cite{Komura:1968}, Theorem 1 and its Corollary). 

\subsection{Cylindrical and stochastic processes}

Throughout this work we assume that $\ProbSpace$ is a complete probability space and consider a filtration $( \mathcal{F}_{t} : t \geq 0)$ on $\ProbSpace$ that satisfies the \emph{usual conditions}, i.e. it is right continuous and $\mathcal{F}_{0}$ contains all subsets of sets of $\mathcal{F}$ of $\Prob$-measure zero. We denote by $L^{0} \ProbSpace$ the space of equivalence classes of real-valued random variables defined on $\ProbSpace$. We always consider the space $L^{0} \ProbSpace$ equipped with the topology of convergence in probability and in this case it is a complete, metrizable, topological vector space. 
We denote by $\mathcal{P}_{\infty}$ the predictable $\sigma$-algebra on $[0, \infty) \times \Omega$ and for any $T>0$, we denote by $\mathcal{P}_{T}$ the restriction of $\mathcal{P}_{\infty}$ to $[0,T] \times \Omega$.  

Let $\Phi$ be a locally convex space. A \emph{cylindrical random variable}\index{cylindrical random variable} in $\Phi'$ is a linear map $X: \Phi \rightarrow L^{0} \ProbSpace$ (see \cite{FonsecaMora:2018}). If $X$ is a cylindrical random variable in $\Phi'$, we say that $X$ is \emph{$n$-integrable} ($n \in \N$)  if $ \Exp \left( \abs{X(\phi)}^{n} \right)< \infty$, $\forall \, \phi \in \Phi$, and has \emph{zero-mean} if $ \Exp \left( X(\phi) \right)=0$, $\forall \phi \in \Phi$. 

Let $X$ be a $\Phi'$-valued random variable, i.e. $X:\Omega \rightarrow \Phi'$ is a $\mathscr{F}/\mathcal{B}(\Phi')$-measurable map. For each $\phi \in \Phi$ we denote by $\inner{X}{\phi}$ the real-valued random variable defined by $\inner{X}{\phi}(\omega) \defeq \inner{X(\omega)}{\phi}$, for all $\omega \in \Omega$. The linear mapping $\phi \mapsto \inner{X}{\phi}$ is called the \emph{cylindrical random variable induced/defined by} $X$. We will say that a $\Phi'$-valued random variable $X$ is \emph{$n$-integrable} if the cylindrical random variable induced by $X$ is \emph{$n$-integrable}. 
 
Let $J=\R_{+} \defeq [0,\infty)$ or $J=[0,T]$ for  $T>0$. We say that $X=( X_{t}: t \in J)$ is a \emph{cylindrical process} in $\Phi'$ if $X_{t}$ is a cylindrical random variable for each $t \in J$. Clearly, any $\Phi'$-valued stochastic processes $X=( X_{t}: t \in J)$ \emph{induces/defines} a cylindrical process under the prescription: $\inner{X}{\phi}=( \inner{X_{t}}{\phi}: t \in J)$, for each $\phi \in \Phi$. 

If $X$ is a cylindrical random variable in $\Phi'$, a $\Phi'$-valued random variable $Y$ is called a \emph{version} of $X$ if for every $\phi \in \Phi$, $X(\phi)=\inner{Y}{\phi}$ $\Prob$-a.e. A $\Phi'$-valued process $Y=(Y_{t}:t \in J)$ is said to be a $\Phi'$-valued \emph{version} of the cylindrical process $X=(X_{t}: t \in J)$ on $\Phi'$ if for each $t \in J$, $Y_{t}$ is a $\Phi'$-valued version of $X_{t}$.  

For a $\Phi'$-valued process $X=( X_{t}: t \in J)$ terms like continuous, c\`{a}dl\`{a}g, purely discontinuous, adapted, predictable, etc. have the usual (obvious) meaning. 

A $\Phi'$-valued random variable $X$ is called \emph{regular} if there exists a weaker countably Hilbertian topology $\theta$ on $\Phi$ (see Section 2 in \cite{FonsecaMora:2018}) such that $\Prob( \omega: X(\omega) \in (\widehat{\Phi}_{\theta})')=1$; here $\Phi_{\theta}$ denotes the space $(\Phi,\theta)$ and $\widehat{\Phi}_{\theta}$ denotes its completion. If $\Phi$ is barrelled, the property of being regular is  equivalent to the property that the law of $Y$ is a Radon measure on $\Phi'$ (see Theorem 2.10 in \cite{FonsecaMora:2018}). A $\Phi'$-valued process $Y=(Y_{t}:t \in J)$ is said to be \emph{regular} if $Y_{t}$ is a regular random variable for each $t \in J$. 

A cylindrical process $Y=(Y_{t}:t \in J)$ in $\Phi'$ is a cylindrical martingale if $Y(\phi)=(Y_{t}(\phi): t \in J)$ is a real-valued martingale for each $\phi \in \Phi$. A $\Phi'$-valued process is a martingale if the induced cylindrical process is a cylindrical martingale in  $\Phi'$.

\subsection{Stochastic Integration in Duals of Nuclear Spaces}\label{subsecStocInteg}

\begin{assumption} From now on $\Phi$ denotes a locally convex space and $\Psi$ denotes a quasi-complete, bornological, nuclear space. 
\end{assumption}

In this section we briefly recall the theory of stochastic integration in duals of nuclear spaces in introduced in  \cite{FonsecaMora:2018-1}. We start with the definition of cylindrical martingale-valued measure.

Let $U$ be a topological space and consider a ring $\mathcal{R}\subseteq \mathcal{B}(U)$ that generates $\mathcal{B}(U)$.  A \emph{cylindrical martingale-valued measure} on $\R_{+} \times \mathcal{R}$ is a collection $M=(M(t,A): t \geq 0, A \in \mathcal{R})$ of cylindrical random variables in $\Phi'$ such that:
\begin{enumerate}
\item $\forall \, A \in \mathcal{R}$, $M(0,A)(\phi)= 0$ $\Prob$-a.s., $\forall \phi \in \Phi$.
\item $\forall t \geq 0$, $M(t,\emptyset)(\phi)= 0$ $\Prob$-a.s. $\forall \phi \in \Phi$ and if $A, B \in \mathcal{R}$ are disjoint then 
$$M(t,A \cup B)(\phi)= M(t,A)(\phi) + M(t,B)(\phi) \, \Prob \mbox{-a.s.}, \quad \forall \phi \in \Phi.$$
\item $\forall \, A \in \mathcal{R}$, $(M(t,A): t \geq 0)$ is a cylindrical zero-mean square integrable c\`{a}dl\`{a}g martingale. 
\item For disjoint $A, B \in \mathcal{R}$, $\Exp \left( M(t,A)(\phi) M(s,B)(\varphi) \right)=0$, for each $t,s \geq 0$, $\phi, \varphi \in \Phi$. 
\end{enumerate}
We will further assume that the following properties are satisfied:
\begin{enumerate}  \setcounter{enumi}{4}
\item For $0\leq s < t$, $M ( (s, t], A)(\phi) \defeq (M(t,A)- M(s,A))(\phi)$ is independent of $\mathcal{F}_{s}$, for all $A \in \mathcal{R}$, $\phi \in \Phi$.
\item  For each $A \in \mathcal{R}$ and $0 \leq s < t$, 
\begin{equation} \label{covarianceFunctionalNuclearMartValuedMeasure}
\Exp \left( \abs{ M((s,t],A)(\phi)}^{2} \right) = \int_{s}^{t} \int_{A} q_{r,u}(\phi)^{2} \mu(du) \lambda (dr) , \quad \forall \, \phi \in \Phi,
\end{equation} 
where 
\begin{enumerate}  
\item $\mu$ is a $\sigma$-finite measure on $(U, \mathcal{B}(U))$ satisfying $\mu(A)< \infty$, $\forall \, A \in \mathcal{R}$,
\item $\lambda$ is a $\sigma$-finite measure on $(\R_{+},\mathcal{B}(\R_{+}))$, finite on bounded intervals,
\item $\{q_{r,u}: r \in \R_{+}, \, u \in U \}$ is a family of  continuous Hilbertian semi-norms on $\Phi$, such that for each $\phi$, $\varphi$ in $\Phi$, the map $(r,u) \mapsto q_{r,u}(\phi,\varphi)$ is $\mathcal{B}(\R_{+}) \otimes \mathcal{B}(U)/ \mathcal{B}(\R_{+})$-measurable and  bounded on $[0,T] \times U$ for all $T>0$. Here, $q_{r,u}(\cdot,\cdot)$ denotes the positive, symmetric, bilinear form associated to the Hilbertian semi-norm $q_{r,u}$.  
\end{enumerate}
\end{enumerate}

Now we recall the main class of integrands introduced in \cite{FonsecaMora:2018-1}.  Let $T>0$. We denote by $\Lambda^{2}_{s}(T)$  the collection of all 
families $R=\{R(r,\omega,u):  r \in [0,T], \omega \in \Omega, u \in U  \} $ of linear operators $R(r,\omega,u) \in \mathcal{L}(\Phi'_{q_{r,u}},\Psi')$  satisfying that  the  mapping  $(r,\omega,u) \mapsto q_{r,u}(R(r,\omega,u)' \psi, \phi)$ is $\mathcal{P}_{T} \otimes \mathcal{B}(U)$-measurable (we will say that the family is \emph{$q_{r,u}$-predictable}) and for which 
\begin{equation*} \label{finiteSecondMomentIntegrands}
\Exp \int_{0}^{T} \int_{U} q_{r,u}(R(r,u)'\psi)^{2} \mu(du) \lambda(dr) < \infty, \quad \forall \, \psi \in \Psi.
\end{equation*}

For each $R \in \Lambda^{2}_{s}(T)$, there exist a unique (up to indistinguishability) $\Phi'$-valued, regular, square integrable martingale with c\`{a}dl\`{a}g paths, $ \int_{0}^{t} \int_{U} R(r,u) M(dr,du)$, $t \in [0,T]$, which we call the \emph{strong stochastic integral} of $R$. Details of the construction can be found in Section 5 in \cite{FonsecaMora:2018-1}. 

There exists a particular subclass of integrands which plays an important role on the study of properties of the stochastic integral and which consists of operator-valued mappings with range on a Hilbert space embedded in $\Psi'$. 

Let $p$ be a continuous Hilbertian semi-norm on $\Psi$. Let $\Lambda^{2}_{s}(p,T)$ denote the collection of families $\tilde{R}=\{\tilde{R}(r,\omega,u)\}$ of linear operators $\tilde{R}(r,\omega,u) \in \mathcal{L}_{2}(\Phi'_{q_{r,u}},\Psi'_{p})$, $r \in [0, T]$, $\omega \in \Omega$, $u \in U$, which are $q_{r,u}$-predictable, and for which 
\begin{equation} \label{finiteSecondMomentIntegrandsStrongIntgInHilbertSpace*}
\Exp \int_{0}^{T} \int_{U} \norm{\tilde{R}(r,u)}^{2}_{\mathcal{L}_{2}(\Phi'_{q_{r,u}},\Psi'_{p})} \mu(du) \lambda(dr) < \infty.
\end{equation} 
For $\tilde{R} \in \Lambda^{2}_{s}(p,T)$, it follows by Theorem 3.3.16 in \cite{FonsecaMoraThesis} that the stochastic integral 
$ \int_{0}^{t} \int_{U} \tilde{R}(r,\omega,u) M(dr,du)$ is a $\Phi'_{p}$-valued zero-mean square integrable martingale with c\`{a}dl\`{a}g paths satisfying for each $0 \leq t \leq T$, 
\begin{equation}\label{eqItoIsometryStrongInte}
\Exp \left( p'\left( \int_{0}^{t} \int_{U} \tilde{R}(r,u) M(dr,du) \right)^{2} \right)= \Exp \int_{0}^{t} \int_{U} \norm{\tilde{R}(r,u)}^{2}_{\mathcal{L}_{2}(\Phi'_{q_{r,u}},\Psi'_{p})} \mu(du) \lambda(dr)
\end{equation}
Moreover, by Theorem 5.11 in \cite{FonsecaMora:2018-1} for every $R \in \Lambda^{2}_{s}(T)$ there exists a continuous Hilbertian seminorm $p$ on $\Phi$ and some $\tilde{R} \in \Lambda^{2}_{s}(p,T)$ such that $R(r,\omega,u)=i'_{p} \tilde{R}(r,\omega,u)$ for $\lambda \otimes \Prob \otimes \mu$-a.e. $(r,\omega,u)$ and the stochastic integral 
$ \int_{0}^{t} \int_{U} \tilde{R}(r,\omega,u) M(dr,du)$ is a $\Phi'_{p}$-valued version of $ \int_{0}^{t} \int_{U} R(r,\omega,u) M(dr,du)$. 

\section{Regularity of Paths of the Stochastic convolution}\label{sectRegulaPathsStochConvo}

\begin{assumption}
Al through this section $M=(M(t,A): t \geq 0, A \in \mathcal{R})$ denotes a cylindrical martingale-valued measure as in Section \ref{subsecStocInteg} with $\lambda$ being the Lebesgue measure on $(\R_{+}, \mathcal{B}(\R_{+}))$, and let $R \in \Lambda^{2}_{s}(T)$. 
\end{assumption}

Let $(S(t): t \geq 0)$ be a $(C_{0},1)$-semigroup on $\Psi$.  
In Proposition 6.8 in \cite{FonsecaMora:2018-1} it is shown that the stochastic convolution 
\begin{equation}\label{defiStochCOnvolu}
X_{t}=\int_{0}^{t} \int_{U} S(t-r)' R(r,u)  M(dr,du), \quad \forall \, t \in [0,T],
\end{equation}
is a $\Psi'$-valued regular, adapted, square integrable process. Moreover, Theorem 6.14 in \cite{FonsecaMora:2018-1} shows that there exists a continuous Hilbertian seminorm $q$ on $\Phi$ such that the stochastic convolution has a $\Phi'_{q}$-valued, mean-square continuous, predictable version with second moments. In the following result we show that  if $(S(t): t \geq 0)$ is Hilbertian, then a  c\`{a}dl\`{a}g version exists. 
 
\begin{theorem} \label{theoCadlagVerStochConv} 
Assume that $(S(t): t \geq 0)$ is a Hilbertian $(C_{0},1)$-semigroup on $\Psi$. Then there exists a continuous Hilbertian seminorm $p$ on $\Phi$ such that the stochastic convolution $(X_{t}: t \in [0,T])$ given in \eqref{defiStochCOnvolu} has a $\Psi'_{p}$-valued square integrable adapted c\`{a}dl\`{a}g version $(Y_{t}: t \in [0,T])$ satisfying $\sup_{t \in [0,T]} \Exp( p'(Y_{t})^{2})  < \infty$. In particular, the stochastic convolution has a $\Psi'$-valued c\`{a}dl\`{a}g version.  

Moreover if for each $A \in \mathcal{R}$ and $\phi \in \Phi$, the real-valued process $(M(t,A)(\phi): t \geq 0)$ is continuous, then the results above remain valid replacing the property c\`{a}dl\`{a}g by continuous.
\end{theorem}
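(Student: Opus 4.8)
The strategy is to descend to a single Hilbert space $\Psi'_{p}$ continuously embedded in $\Psi'$ and there to invoke Kotelenez's theorem \cite{Kotelenez:1982} on the path regularity of Hilbert-space-valued stochastic convolutions. First I would apply Theorem 5.11 in \cite{FonsecaMora:2018-1} to obtain a continuous Hilbertian seminorm $p_{0}$ on $\Psi$ and $\tilde{R}\in\Lambda^{2}_{s}(p_{0},T)$ with $R=i'_{p_{0}}\tilde{R}$ off a $\lambda\otimes\Prob\otimes\mu$-null set. Since $(S(t):t\ge0)$ is a Hilbertian $(C_{0},1)$-semigroup there is a generating family $\Pi$ of Hilbertian seminorms with $p(S(t)\psi)\le e^{\theta_{p}t}p(\psi)$ for each $p\in\Pi$; continuity of $p_{0}$ yields some $\bar{p}\in\Pi$ and $C>0$ with $p_{0}\le C\bar{p}$, and I set $p\defeq C\bar{p}$. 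Then $p$ is Hilbertian, dominates $p_{0}$, and still satisfies $p(S(t)\psi)\le e^{\theta t}p(\psi)$; the domination $p_{0}\le p$ gives $\norm{i'_{p_{0},p}}\le1$, so $\tilde{R}$ composed with this inclusion remains in $\Lambda^{2}_{s}(p,T)$.

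The bound $p(S(t)\psi)\le e^{\theta t}p(\psi)$ shows that each $S(t)$ induces an operator $S_{p}(t)$ on $\Psi_{p}$ with $\norm{S_{p}(t)}_{\mathcal{L}(\Psi_{p},\Psi_{p})}\le e^{\theta t}$; strong continuity passes from $\Psi$ to the dense subspace $i_{p}(\Psi)$ and, through the uniform bound, to all of $\Psi_{p}$, so $(S_{p}(t))$ is an exponentially bounded $C_{0}$-semigroup on $\Psi_{p}$ and its adjoint $(S_{p}(t)')$ is one on the Hilbert space $\Psi'_{p}$. Dualizing the intertwining identity $S_{p}(t)\circ i_{p}=i_{p}\circ S(t)$ gives $i'_{p}\circ S_{p}(t)'=S(t)'\circ i'_{p}$, whence $S(t-r)'R(r,u)=i'_{p}\,S_{p}(t-r)'\tilde{R}(r,u)$ and consequently $X_{t}=i'_{p}Y_{t}$, where I define the $\Psi'_{p}$-valued convolution $Y_{t}\defeq\int_{0}^{t}\int_{U}S_{p}(t-r)'\tilde{R}(r,u)\,M(dr,du)$; its integrand lies in $\Lambda^{2}_{s}(p,T)$ since $\norm{S_{p}(t-r)'\tilde{R}(r,u)}_{\mathcal{L}_{2}(\Phi'_{q_{r,u}},\Psi'_{p})}\le e^{\theta(t-r)}\norm{\tilde{R}(r,u)}_{\mathcal{L}_{2}(\Phi'_{q_{r,u}},\Psi'_{p})}$.

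It now suffices to produce a c\`{a}dl\`{a}g version of $Y$ in $\Psi'_{p}$. The uniform second-moment bound is immediate: by the isometry \eqref{eqItoIsometryStrongInte} and the last estimate, $\sup_{t\le T}\Exp(p'(Y_{t})^{2})\le e^{2\theta T}\,\Exp\int_{0}^{T}\int_{U}\norm{\tilde{R}(r,u)}^{2}_{\mathcal{L}_{2}(\Phi'_{q_{r,u}},\Psi'_{p})}\mu(du)\,\lambda(dr)<\infty$. For the path regularity I would identify $Y_{t}$ with the Hilbert-space convolution $\int_{0}^{t}S_{p}(t-r)'\,dN_{r}$ of $(S_{p}(t)')$ against $N_{t}\defeq\int_{0}^{t}\int_{U}\tilde{R}(r,u)\,M(dr,du)$, which by the construction recalled before \eqref{eqItoIsometryStrongInte} is a zero-mean square integrable c\`{a}dl\`{a}g martingale in $\Psi'_{p}$; the identification amounts to pulling the fixed bounded operator $S_{p}(t-r)'$ through the integral. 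Kotelenez's theorem for an exponentially bounded $C_{0}$-semigroup and a square integrable c\`{a}dl\`{a}g martingale then furnishes a c\`{a}dl\`{a}g version of $Y$, and since $i'_{p}:\Psi'_{p}\to\Psi'$ is continuous and linear, $i'_{p}Y$ is the asserted $\Psi'$-valued c\`{a}dl\`{a}g version of $X$. If in addition $(M(t,A)(\phi):t\ge0)$ is continuous for all $A\in\mathcal{R}$ and $\phi\in\Phi$, then $N$ is continuous and Kotelenez's result yields a continuous version instead.

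The heart of the matter, and the step I expect to be the main obstacle, is the path-regularity assertion for $Y$: because $t\mapsto Y_{t}$ is not a martingale (the propagator $S_{p}(t-r)'$ depends on the terminal time $t$), Doob's inequality is unavailable directly. The route I would follow is Kotelenez's: replace the generator $A'_{p}$ of $(S_{p}(t)')$ by its Yosida approximations $A_{n}\defeq nA'_{p}(nI-A'_{p})^{-1}$, whose boundedness makes $Y^{n}_{t}\defeq\int_{0}^{t}e^{(t-r)A_{n}}\,dN_{r}$ solve $dY^{n}_{t}=A_{n}Y^{n}_{t}\,dt+dN_{t}$ and hence be c\`{a}dl\`{a}g; applying the It\^{o} formula to $p'(Y^{n}_{t})^{2}$, controlling $\inner{A_{n}Y^{n}_{s}}{Y^{n}_{s}}$ (in $\Psi'_{p}$) by $\theta\,p'(Y^{n}_{s})^{2}$, and combining Doob's inequality with Gronwall's lemma produces a maximal inequality $\Exp\sup_{t\le T}p'(Y^{n}_{t})^{2}\le C_{T}\,\Exp\,p'(N_{T})^{2}$ with $C_{T}$ independent of $n$. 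The delicate point is to upgrade the fixed-time convergence $Y^{n}_{t}\to Y_{t}$ in $L^{2}$ (which follows from $e^{tA_{n}}\to S_{p}(t)'$ strongly together with the isometry) to convergence uniform in $t$, so that the c\`{a}dl\`{a}g, respectively continuous, property survives in the limit; the remaining estimates are routine consequences of the nuclear-space integration theory.
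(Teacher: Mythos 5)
Your first half coincides with the paper's own proof: Theorem 5.11 of \cite{FonsecaMora:2018-1} to get $p_{0}$ and $\tilde{R}$, passage to a dominating seminorm from the Hilbertian family $\Pi$, the induced semigroup $S_{p}(t)$ on $\Psi_{p}$ with $\norm{S_{p}(t)}\leq e^{\theta_{p}t}$, the dualized intertwining giving $\ind{[0,t]}{r}S(t-r)'R(r,u)=i'_{p}S_{p}(t-r)'\tilde{R}(r,u)$ and $X_{t}=i'_{p}Y_{t}$, and the uniform second-moment bound via the It\^{o} isometry \eqref{eqItoIsometrySpacePhiq}. Where you diverge is the path-regularity step, and that is where your proposal has genuine gaps. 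The paper does not invoke Kotelenez as a black box: it proves a Kotelenez-type maximal inequality \emph{inside its own integration theory} (Lemma \ref{lemmKotelenezIneq}, valid for $\int_{0}^{t}\int_{U}S_{p}(t-r)'F(r,u)\,M(dr,du)$ with any $F\in\Lambda^{2}_{s}(p,T)$, obtained by the partition/iteration argument of Proposition 9.18 in \cite{PeszatZabczyk}), and then approximates $Y$ by the dyadic time-discretizations $Y^{k}(t)=\int_{0}^{t}\int_{U}S_{p}(t-r(k))'\tilde{R}(r,u)\,M(dr,du)$, which are c\`{a}dl\`{a}g by inspection (finite sums of $S_{p}(\cdot)'$ applied to c\`{a}dl\`{a}g stochastic integrals). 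The decisive device is that the \emph{differences} of these approximants factor as a single-semigroup convolution with a modified integrand, $Y^{m}(t)-Y^{k}(t)=\int_{0}^{t}\int_{U}S_{p}(t-r(m))'F^{m,k}(r,u)\,M(dr,du)$ with $F^{m,k}=(I'_{p}-S_{p}(r(m)-r(k))')\tilde{R}$, so the maximal inequality applies to the differences and dominated convergence yields the uniform Cauchy-in-probability property, hence an a.e.-uniformly convergent subsequence and a c\`{a}dl\`{a}g (resp.\ continuous) limit.

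Your two routes each have a hole at exactly this point. First, reducing to Kotelenez's theorem \cite{Kotelenez:1982} requires the identification $\int_{0}^{t}\int_{U}S_{p}(t-r)'\tilde{R}(r,u)\,M(dr,du)=\int_{0}^{t}S_{p}(t-r)'\,dN_{r}$ with $N_{t}\defeq\int_{0}^{t}\int_{U}\tilde{R}(r,u)\,M(dr,du)$; this is not ``pulling a fixed bounded operator through the integral'' --- Proposition 5.18 of \cite{FonsecaMora:2018-1} covers a single constant operator, whereas for each fixed $t$ the map $r\mapsto S_{p}(t-r)'$ is a time-dependent operator family --- but an associativity statement between the $\Lambda^{2}_{s}(p,T)$-theory and classical Hilbert-space integration against the martingale $N$, which would have to be proved (compute the covariance of $N$, match the two isometries on elementary integrands, extend by density); this compatibility is presumably what the paper deliberately avoids by proving the inequality natively. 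Second, and more seriously, your Yosida backup does not close: the bound $\Exp\sup_{t\leq T}p'(Y^{n}_{t})^{2}\leq C_{T}\,\Exp\,p'(N_{T})^{2}$, uniform in $n$, controls each approximant but not the differences, and $Y^{n}_{t}-Y^{m}_{t}=\int_{0}^{t}(e^{(t-r)A_{n}}-e^{(t-r)A_{m}})\,dN_{r}$ is \emph{not} a single-semigroup convolution, so neither Doob nor your Gronwall-derived inequality applies to it; fixed-time $L^{2}$ convergence plus uniform bounds on each $Y^{n}$ do not yield uniform-in-$t$ convergence in probability, which is precisely what is needed for the c\`{a}dl\`{a}g property to survive the limit --- you flag this step as delicate, but you offer no mechanism for it, and the paper's dyadic discretization is engineered exactly so that differences stay within the scope of the maximal inequality. (A minor point in your favor: the quasi-contractive bound $\norm{S_{p}(t)}\leq e^{\theta_{p}t}$ required by Kotelenez's inequality does hold here by \eqref{eqDefC01Semigroup}, so the hypotheses themselves are not the obstruction.)
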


For our proof of  Theorem \ref{theoCadlagVerStochConv} we will require the following terminology. For $(S(t): t \geq 0)$ let $\Pi$ be the corresponding family of Hilbertian seminorms satisfying \eqref{eqDefC01Semigroup}. 
For each $p \in \Pi$ it is shown in (\cite{Babalola:1974}, Theorems 2.3 and 2.6) that there exists a $C_{0}$-semigroup $( S_{p}(t): t \geq 0)$  on the Banach space (Hilbert space since $p$ is Hilbertian) $\Psi_{p}$ into itself such that
\begin{equation} \label{eqDefExtenSemiGroupToBanach}
 S_{p}(t) i_{p} \psi= i_{p} S(t) \psi, \quad \forall \, \psi \in \Psi, \, t \geq 0.
\end{equation}
Our first step is to prove the following Kotelenez type inequality.

\begin{lemma} \label{lemmKotelenezIneq} Let $(S(t): t \geq 0)$ be a Hilbertian $(C_{0},1)$-semigroup with corresponding family $\Pi$ of Hilbertian seminorms satisfying \eqref{eqDefC01Semigroup}.  For any continuous Hilbertian seminorm $p \in \Pi$, $F \in \Lambda^{2}_{s}(p,T)$, $C>0$ and countable $D \subseteq [0,T]$, 
\begin{multline*}
 \Prob \left( \sup_{t \in D} p' \left( \int^{t}_{0} \int_{U} S_{p}(t-r)'F(r,u) \, M(dr,du) \right)>C  \right) \\
\leq \frac{e^{2\theta_{p} T}}{C^{2}} \Exp \int_{0}^{T} \int_{U} \norm{F(r,u)}^{2}_{\mathcal{L}_{2}(\Phi'_{q_{r,u}},\Psi'_{p})}  \mu(du)dr,
\end{multline*}
where $( S_{p}(t): t \geq 0)$ is the corresponding $C_{0}$-semigroup on the Hilbert space $\Psi_{p}$ satisfying \eqref{eqDefC01Semigroup} (replacing $S(t)$ with $S_{p}(t)$) and \eqref{eqDefExtenSemiGroupToBanach}.
\end{lemma}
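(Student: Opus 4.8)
The plan is to reduce the maximal inequality to a discrete submartingale estimate and then invoke Doob's inequality, following the strategy of Kotelenez. Since $D$ is countable, write $D=\bigcup_n D_n$ with $D_n=\{s_1<\dots<s_{m_n}\}$ finite and increasing; because $\{\sup_{t\in D}(\cdots)>C\}=\bigcup_n\{\max_{t\in D_n}(\cdots)>C\}$ and the right-hand bound is independent of $n$, continuity from below of $\Prob$ reduces everything to a fixed finite set $D=\{s_1<\dots<s_m\}\subseteq[0,T]$. Throughout I would use that the extension satisfies $\norm{S_p(t)}_{\mathcal{L}(\Psi_p)}\le e^{\theta_p t}$ (this is \eqref{eqDefExtenSemiGroupToBanach} combined with \eqref{eqDefC01Semigroup}), so that the dual operators obey $p'(S_p(t)'f)\le e^{\theta_p t}p'(f)$ for all $f\in\Psi'_p$.

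First I would set $g_k\defeq\int_0^{s_k}\int_U S_p(s_k-r)'F(r,u)\,M(dr,du)$, which is a $\Psi'_p$-valued square integrable random variable by \eqref{eqItoIsometryStrongInte} since $S_p(s_k-\cdot)'F(\cdot,\cdot)\in\Lambda^2_s(p,s_k)$ (composition of a Hilbert--Schmidt operator with the bounded operator $S_p(s_k-r)'$, with strongly continuous and hence $q_{r,u}$-predictable time dependence). Splitting the integral at $s_{k-1}$, using the semigroup identity $S_p(s_k-r)'=S_p(s_k-s_{k-1})'S_p(s_{k-1}-r)'$ for $r\le s_{k-1}$, and pulling the fixed bounded operator $S_p(s_k-s_{k-1})'$ through the strong stochastic integral (a consequence of the linearity of the integral), I obtain the recursion
\begin{equation*}
 g_k=S_p(s_k-s_{k-1})'g_{k-1}+\Delta_k,\qquad \Delta_k\defeq\int_{s_{k-1}}^{s_k}\int_U S_p(s_k-r)'F(r,u)\,M(dr,du).
\end{equation*}
Here $\Delta_k$ is the increment over $(s_{k-1},s_k]$ of a $\Psi'_p$-valued martingale, so $\Exp[\Delta_k\mid\mathcal{F}_{s_{k-1}}]=0$, while $S_p(s_k-s_{k-1})'g_{k-1}$ is $\mathcal{F}_{s_{k-1}}$-measurable.

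Next I would exploit the Hilbert structure of $\Psi'_p$. Conditioning on $\mathcal{F}_{s_{k-1}}$ and using $\Exp[\langle S_p(s_k-s_{k-1})'g_{k-1},\Delta_k\rangle_{\Psi'_{p}}\mid\mathcal{F}_{s_{k-1}}]=0$, the expansion of the square gives $\Exp[p'(g_k)^2\mid\mathcal{F}_{s_{k-1}}]=p'(S_p(s_k-s_{k-1})'g_{k-1})^2+\Exp[p'(\Delta_k)^2\mid\mathcal{F}_{s_{k-1}}]\le e^{2\theta_p(s_k-s_{k-1})}p'(g_{k-1})^2+\Exp[p'(\Delta_k)^2\mid\mathcal{F}_{s_{k-1}}]$. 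Setting $G_k\defeq e^{-2\theta_p s_k}p'(g_k)^2$, this shows $\Exp[G_k\mid\mathcal{F}_{s_{k-1}}]\ge G_{k-1}$, i.e. $(G_k)_{k=1}^m$ is a non-negative submartingale for $(\mathcal{F}_{s_k})$. Doob's maximal inequality then gives $\Prob(\max_k G_k>C^2e^{-2\theta_p T})\le \frac{e^{2\theta_p T}}{C^2}\Exp[G_m]$, and since $p'(g_k)^2=e^{2\theta_p s_k}G_k\le e^{2\theta_p T}G_k$ we have $\{\max_k p'(g_k)>C\}\subseteq\{\max_k G_k>C^2e^{-2\theta_p T}\}$. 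Finally I would compute $\Exp[G_m]=e^{-2\theta_p s_m}\Exp[p'(g_m)^2]$ via the isometry \eqref{eqItoIsometryStrongInte} and the bound $\norm{S_p(s_m-r)'F(r,u)}_{\mathcal{L}_{2}(\Phi'_{q_{r,u}},\Psi'_{p})}\le e^{\theta_p(s_m-r)}\norm{F(r,u)}_{\mathcal{L}_{2}(\Phi'_{q_{r,u}},\Psi'_{p})}$; after the factor $e^{-2\theta_p s_m}$ this leaves $e^{-2\theta_p r}\le1$ inside the integral and $s_m\le T$, producing exactly the claimed right-hand side.

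The isometry computations and the monotone passage to the limit over finite subsets are routine. The step requiring the most care, and the main obstacle, is the rigorous justification of the recursion and of the conditional orthogonality: namely that the deterministic bounded operator $S_p(s_k-s_{k-1})'$ commutes with the strong stochastic integral, and that $\Delta_k$ is a genuinely conditionally centered martingale increment in $\Psi'_p$ that is orthogonal, in conditional $L^2$, to the $\mathcal{F}_{s_{k-1}}$-measurable term. These rest on the linearity and martingale properties of the strong integral from \cite{FonsecaMora:2018-1}, together with the verification that $S_p(s_k-\cdot)'F(\cdot,\cdot)\in\Lambda^2_s(p,s_k)$ so that all the Hilbert-space-valued manipulations are licit.
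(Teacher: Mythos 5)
Your setup matches the paper's proof almost line for line: the reduction to finitely many times, the verification that $S_p(t-\cdot)'F \in \Lambda^{2}_{s}(p,t)$, the recursion $g_{k}=S_p(s_{k}-s_{k-1})'g_{k-1}+\Delta_{k}$ obtained by splitting the integral and commuting the bounded operator through it, and the conditional centering of $\Delta_{k}$ are all exactly the ingredients used there. The fatal step is the claim that $G_{k}\defeq e^{-2\theta_{p}s_{k}}p'(g_{k})^{2}$ is a submartingale. From your own expansion you have
\begin{equation*}
\Exp\left[\,p'(g_{k})^{2}\mid\mathcal{F}_{s_{k-1}}\right]\;\leq\; e^{2\theta_{p}(s_{k}-s_{k-1})}\,p'(g_{k-1})^{2}+\Exp\left[\,p'(\Delta_{k})^{2}\mid\mathcal{F}_{s_{k-1}}\right],
\end{equation*}
which after multiplying by $e^{-2\theta_{p}s_{k}}$ gives $\Exp[G_{k}\mid\mathcal{F}_{s_{k-1}}]\leq G_{k-1}+e^{-2\theta_{p}s_{k}}\Exp[p'(\Delta_{k})^{2}\mid\mathcal{F}_{s_{k-1}}]$ --- an \emph{upper} bound, i.e.\ the wrong direction. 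To conclude $\Exp[G_{k}\mid\mathcal{F}_{s_{k-1}}]\geq G_{k-1}$ you would need the reverse estimate $p'(S_{p}(\delta)'f)\geq e^{\theta_{p}\delta}p'(f)$, a \emph{lower} bound on the semigroup that \eqref{eqDefC01Semigroup} does not provide and that is false in general: take $S_{p}(t)=e^{-t}I$ (so $\theta_{p}=0$ is admissible) and $F$ with tiny Hilbert--Schmidt norm, and $\Exp[G_{k}\mid\mathcal{F}_{s_{k-1}}]=e^{-2(s_{k}-s_{k-1})}p'(g_{k-1})^{2}+\varepsilon<G_{k-1}$; for a nilpotent semigroup (left shift) $S_{p}(\delta)'$ can annihilate $g_{k-1}$ outright. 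This is precisely why the stochastic convolution is not a submartingale and why Kotelenez's result is a ``submartingale \emph{type}'' inequality: Doob's maximal inequality is not directly available, and no rescaling of $p'(g_{k})^{2}$ by deterministic exponentials can repair this, since the obstruction is the possible decay of the semigroup, not its growth.

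The paper circumvents this (following Proposition 9.18 of Peszat--Zabczyk) with a first-exceedance decomposition rather than Doob: with $\mathbbm{1}_{k}=\prod_{j=1}^{k-1}\mathbbm{1}_{\{p'(Y(t_{j}))\leq C\}}$, which is $\mathcal{F}_{t_{k-1}}$-measurable, one writes $\Prob\left(\max_{k}p'(Y(t_{k}))>C\right)$ as the sum over $k$ of the probabilities that the first exceedance occurs at $t_{k}$, bounds each term by Chebyshev as $C^{-2}\,\Exp\left[p'(Y(t_{k}))^{2}\mathbbm{1}_{\{p'(Y(t_{k}))>C\}}\mathbbm{1}_{k}\right]$, and then runs exactly your one-sided recursion \emph{with the indicator inserted} (the cross term still vanishes conditionally because $\mathbbm{1}_{k}$ is $\mathcal{F}_{t_{k-1}}$-measurable, and $\mathbbm{1}_{k}\leq\mathbbm{1}_{k-1}$), iterating to telescope the sum into $e^{2\theta_{p}T}\,\Exp\int_{0}^{T}\int_{U}\norm{F(r,u)}^{2}_{\mathcal{L}_{2}(\Phi'_{q_{r,u}},\Psi'_{p})}\,\mu(du)\,dr$. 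So every estimate you established remains usable; only the maximal-inequality step must be rerouted through this stopping/indicator argument. (A genuinely Doob-based proof does exist via unitary dilation of the semigroup in the style of Hausenblas--Seidler, but that is different machinery; note also that $t\mapsto\int_{0}^{t}\int_{U}S_{p}(T-r)'F(r,u)\,M(dr,du)=S_{p}(T-t)'Y(t)$ \emph{is} an honest martingale to which Doob applies, yet transferring its maximal bound back to $p'(Y(t))$ again requires the unavailable lower bound on $S_{p}$.)
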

\begin{proof}
We modify to our context the arguments used in the proof of Proposition 9.18 in \cite{PeszatZabczyk}. 
  
For fixed $t \in [0,T]$, one can easily check that the family $\{ S_{p}(t-r)' F(r,\omega,u): r \in[0,t], \omega \in \Omega, u \in U \}$  belongs to $\Lambda^{2}_{s}(p,t)$. In particular, 
\begin{multline*}
\Exp \int_{0}^{t} \int_{U} \norm{S_{p}(t-r)' F(r,u)}^{2}_{\mathcal{L}_{2}(\Phi'_{q_{r,u}},\Psi'_{p})} \mu(du) dr \\ \leq e^{2\theta_{p}t} \Exp \int_{0}^{t} \int_{U} \norm{F(r,u)}^{2}_{\mathcal{L}_{2}(\Phi'_{q_{r,u}},\Psi'_{p})} \mu(du) dr < \infty.
\end{multline*}

Hence, from Theorem 3.3.16 in \cite{FonsecaMoraThesis} the stochastic convolution 
$$Y(t) \defeq \int^{t}_{0} \int_{U} \, S_{p}(t-r)' F(r,u) \, M(dr,du), \quad t \in [0,T], $$
is a $\Phi'_{p}$-valued adapted process such that for each $t \in [0,T]$,
\begin{multline}\label{eqItoIsometrySpacePhiq}
\Exp \,\left[ p'\left( \int^{t}_{0} \int_{U} S_{p}(t-r)' F(r,u) \, M(dr,du) \right)^{2} \right]\\
=  \int^{t}_{0} \int_{U} \norm{S_{p}(t-r)' F(r,u)}^{2}_{\mathcal{L}_{2}(\Phi'_{q_{r,u}},\Psi'_{p})}  \mu(du)dr < \infty.  
\end{multline}

Let $0=t_{0}<t_{1}< \dots < t_{n}=T$ and $C>0$. Then, 
\begin{eqnarray}
\Prob \left( \max_{1 \leq k \leq n} p'(Y(t_{k}))  >C\right) & = & \sum_{k=1}^{n} \Prob \left( \cap_{j=1}^{k-1} \{ p'(Y(t_{j}))\leq C\} \cap \{ p'(Y(t_{k}))>C\} \right) \nonumber \\
& \leq & \frac{1}{C^{2}} \sum_{k=1}^{n} \Exp ( p'(Y(t_{k}))^{2} \mathbbm{1}_{\{ p'(Y(t_{k}))>C \}}    \mathbbm{1}_{k} ), \label{eqMaxInequForKotelenez}
\end{eqnarray}
where $\mathbbm{1}_{k}= \prod_{j=1}^{k-1} \mathbbm{1}_{\{ p'(Y(t_{j}) \leq C \}}$. Now, observe that for each $1 \leq k \leq n$, the semigroup property of $(S_{p}(t): t \geq 0)$ and the action of the continuous linear operators on the stochastic integral (as for example in Proposition 5.18 in \cite{FonsecaMora:2018-1}) show that we have
$$Y(t_{k})= S_{p}(t_{k}-t_{k-1})' \,Y(t_{k-1})+ \int_{t_{k-1}}^{t_{k}} \int_{U}  S_{p}(t_{k}-r)'F(r,u)  M(dr,du).$$
Then, by the martingale property of the stochastic integral, the It\^{o} isometry \eqref{eqItoIsometrySpacePhiq} and  \eqref{eqDefC01Semigroup} (replacing $S(t)$ with $S_{p}(t)$), we get 
\begin{flalign}
 & \Exp ( p'(Y(t_{k}))^{2} \mathbbm{1}_{k} ) \nonumber \\
 & = \Exp ( p'(S_{p}(t_{k}-t_{k-1})' \,Y(t_{k-1}))^{2} \mathbbm{1}_{k} )+\Exp \left( p' \left( \int_{t_{k-1}}^{t_{k}} \int_{U}  S_{p}(t_{k}-r)'F(r,u)M(dr,du) \right)^{2} \mathbbm{1}_{k} \right) \nonumber \\
 & \leq e^{2 \theta_{p}(t_{k}-t_{k-1})} \left( \Exp ( p'(Y(t_{k-1}))^{2} \mathbbm{1}_{k-1} )+ \Exp \int_{t_{k-1}}^{t_{k}} \int_{U}  \norm{F(r,u)}^{2}_{\mathcal{L}_{2}(\Phi'_{q_{r,u}},\Psi'_{p})} \mu(du) dr \right).  \nonumber
 \end{flalign}
Then, by iteration we have
\begin{eqnarray*}
\sum_{k=1}^{n} \Exp ( p'(Y(t_{k}))^{2} \mathbbm{1}_{k} ) 
& \leq & e^{2 \theta_{p}T} \sum_{k=1}^{n} \Exp \int_{t_{k-1}}^{t_{k}} \int_{U}  \norm{F(r,u)}^{2}_{\mathcal{L}_{2}(\Phi'_{q_{r,u}},\Psi'_{p})} \mu(du) dr \\
& = & e^{2 \theta_{p}T} \Exp \int_{0}^{T} \int_{U}  \norm{F(r,u)}^{2}_{\mathcal{L}_{2}(\Phi'_{q_{r,u}},\Psi'_{p})} \mu(du) dr.
\end{eqnarray*}
Combining the above inequality with \eqref{eqMaxInequForKotelenez} we conclude the estimate in Lemma \ref{lemmKotelenezIneq}. 
\end{proof}

\begin{proof}[Proof of Theorem \ref{theoCadlagVerStochConv}] 
Let $p$ be a continuous Hilbertian seminorm on $\Phi$ and $\tilde{R} \in \Lambda^{2}_{s}(p,T)$ such that $R(r,\omega,u)=i'_{p} \tilde{R}(r,\omega,u)$ $\lambda \otimes \Prob \otimes \mu$-a.e.
Since the family of Hilbertian seminorms $\Pi$ generates the topology on $\Psi$, then we can assume $p \in \Pi$. The corresponding $C_{0}$-semigroup $( S_{p}(t): t \geq 0)$ on the Hilbert space $\Psi_{p}$ satisfies \eqref{eqDefC01Semigroup} (replacing $S(t)$ with $S_{p}(t)$) and \eqref{eqDefExtenSemiGroupToBanach}. 

Then, for fixed $t \in [0,T]$ it follows from the above properties that for $\mbox{Leb}\otimes\mu-\mbox{a.e. } (r,u) \in [0,T]\times U$, 
\begin{equation}\label{eqDecomIntegrandStocConv}
\ind{[0,t]}{r} S(t-r)'R(r,\omega,u)=i'_{p} S_{p}(t-r)' \tilde{R}(r,\omega,u).  
\end{equation} 
Moreover, the family $\{ S_{p}(t-r)' \tilde{R}(r,\omega,u): r \in[0,t], \omega \in \Omega, u \in U \}$  belongs to $\Lambda^{2}_{s}(p,t)$ and the  stochastic convolution 
$$\int^{t}_{0} \int_{U} \, S_{p}(t-r)' \tilde{R}(r,u) \, M(dr,du), \quad t \in [0,T], $$
is a $\Psi'_{p}$-valued adapted process satisfying the It\^{o} isometry \eqref{eqItoIsometrySpacePhiq} (with $F$ being replaced by $ \tilde{R}$). Furthermore, it follows from \eqref{eqDecomIntegrandStocConv}, Theorem 3.3.17 in \cite{FonsecaMoraThesis}, and Theorem 5.11 in \cite{FonsecaMora:2018-1}, that for each $t \in [0,T]$, $\Prob$-a.e.
\begin{equation}\label{eqEqualityASStochConvo}
\int_{0}^{t} \int_{U} S(t-r)' R(r,u)  M(dr,du) =
i'_{p} \int^{t}_{0} \int_{U} S_{p}(t-r)' \tilde{R}(r,u) \,   M(dr,du). 
\end{equation}
Therefore, $\displaystyle{\int^{t}_{0} \int_{U} S_{p}(t-r)' \tilde{R}(r,u)  \,  M(dr,du)}$ is a $\Psi'_{p}$-valued square integrable version for the  stochastic convolution $\displaystyle{\int_{0}^{t} \int_{U} S(t-r)' R(r,u) \,  M(dr,du)}$. 

Hence in order to prove Theorem \ref{theoCadlagVerStochConv} it suffices to show  that the stochastic convolution $\displaystyle{\int^{t}_{0} \int_{U} S_{p}(t-r)' \tilde{R}(r,u) \,   M(dr,du)}$ has a c\`{a}dl\`{a}g version.

Given $k \in \N$, let $r(k)=iT/2^{k}$ if $r \in ( iT/2^{k}, (i+1)T/2^{k}]$ for $i=0,1, \dots, 2^{k-1}$ and consider the $\Psi'_{p}$-valued process
$$Y^{k}(t) = \int^{t}_{0} \int_{U} S_{p}(t-r(k))' \tilde{R}(r,u) \,   M(dr,du), \quad \forall \, t \in [0,T].$$
Observe that because for each $t \in [0,T]$, 
\begin{multline*}
\Exp \int^{t}_{0} \int_{U} \norm{(S_{p}(t-r(k))'-S_{p}(t-r)') \tilde{R}(r,u)}^{2}_{\mathcal{L}_{2}(\Phi'_{q_{r,u}},\Psi'_{p})}  \mu(du)dr \\
\leq 2e^{2\theta_{p}t} \Exp \int_{0}^{T} \int_{U} \norm{\tilde{R}(r,u)}^{2}_{\mathcal{L}_{2}(\Phi'_{q_{r,u}},\Psi'_{p})} \mu(du) \lambda(dr) < \infty,
\end{multline*}
then by the strong continuity of the $C_0$-semigroup $S_{p}(t)$ and dominated convergence we have that 
$$ \lim_{k \rightarrow \infty} \Exp \int^{t}_{0} \int_{U} \norm{(S_{p}(t-r(k))'-S_{p}(t-r)') \tilde{R}(r,u)}^{2}_{\mathcal{L}_{2}(\Phi'_{q_{r,u}},\Psi'_{p})}  \mu(du)dr=0.$$
Therefore, by the It\^{o} isometry \eqref{eqItoIsometrySpacePhiq} (with $F$ being replaced by $ \tilde{R}$) we conclude  that
$$ \lim_{k \rightarrow \infty} \Exp \left[ \, p' \left( \int^{t}_{0} \int_{U} (S_{p}(t-r(k))'-S_{p}(t-r)') \tilde{R}(r,u) \,  M(dr,du) \right)^{2} \right] =0.$$
So for each $t \in [0,T]$, $Y^{k}(t)$ converges in  $L^{2} (\Omega, \mathcal{F}, \Prob; \Psi'_{p})$ (the space of square integrable random variables in $\Psi'_{p}$) to $\int^{t}_{0} \int_{U} S_{p}(t-r)' \tilde{R}(r,u) \,   M(dr,du)$. 

Now, observe that for each $k \in \N$, the process $Y^{k}$ is c\`{a}dl\`{a}g. To see why this is true, note that if $t \in (iT/2^{k}, (i+1)T/2^{k}]$, then 
\begin{multline*}
Y^{k}(t)=S_{p}\left(t-\frac{iT}{2^{k}} \right)' \int^{t}_{\frac{iT}{2^{k}}} \int_{U} \tilde{R}(r,u) \,  M(dr,du) \\
+\sum_{j=1}^{i} S_{p} \left( t-\frac{(j-1)T}{2^{k}} \right)'  \int^{\frac{jT}{2^{k}}}_{\frac{(j-1)T}{2^{k}}} \int_{U} \tilde{R}(r,u) \,  M(dr,du).
\end{multline*}
Thus, the strong continuity of the semigroup $S_{p}$ and because the stochastic integral $\int_{0}^{t} \int_{U} \widetilde{R}(r,u) \,  M(dr,du)$ have c\`{a}dl\`{a}g paths we conclude that  each $Y^{k}$ is c\`{a}dl\`{a}g. 

Our next objective is to show that the sequence $(Y^{k}: k \in \N)$ has a subsequence that converges in probability uniformly on $[0,T]$. In that case, it will follows that  
$$ \int^{t}_{0} \int_{U} S_{p}(t-r)'\tilde{R}(r,u) \,  M(dr,du), \quad \forall \, t \in [0,T],$$
has a c\`{a}dl\`{a}g version. 

Assume $m \geq k$. Then, because $r(m) \geq r(k)$, we have for every $t \in [0,T]$ that  
\begin{flalign}
& Y^{m}(t)-Y^{k}(t) \nonumber \\
& =  \int^{t}_{0} \int_{U} (S_{p}(t-r(m))'-S_{p}(t-r(k))') \tilde{R}(r,u) \, M(dr,du) \nonumber \\
& =  \int^{t}_{0} \int_{U} S_{p}(t-r(m))'(I'_{p}-S_{p}(r(m)-r(k))')  \tilde{R}(r,u) \,M(dr,du), \label{eqDiferenceApproxOU} 
\end{flalign}
where $I_{p}$ is the identity in $\Psi_{p}$. Let $F^{m,k}(r,\omega,u)=(I'_{p}-S_{p}(r(m)-r(k))') \tilde{R}(r,\omega,u)$ for $(r,\omega,u) \in [0,T] \times \Omega \times U$. By \eqref{eqDefC01Semigroup} (replacing $S(t)$ with $S_{p}(t)$) we have  
\begin{multline}\label{eqNormIntegraFmk}
\Exp \int_{0}^{T} \int_{U} \norm{F^{m,k}(r,u)}^{2}_{\mathcal{L}_{2}(\Phi'_{q_{r,u}},\Psi'_{p})} \mu(du) dr \\ \leq (1+e^{2\theta_{p}T}) \Exp \int_{0}^{T} \int_{U} \norm{\tilde{R}(r,u)}^{2}_{\mathcal{L}_{2}(\Phi'_{q_{r,u}},\Psi'_{p})} \mu(du) dr < \infty.
\end{multline}
Hence $F^{m,k} \in \Lambda^{2}_{s}(p,T)$. Then, from Lemma \ref{lemmKotelenezIneq}, the fact that $Y^{m}$ and $Y^{k}$ are c\`{a}dl\`{a}g and from \eqref{eqDiferenceApproxOU} it follows that
\begin{multline}
 \Prob \left( \sup_{t \in [0,T]} p'\left( Y^{m}(t)-Y^{k}(t)  \right)^{2}>C  \right) \\ 
\leq \frac{e^{2\theta_{p}T}}{C^{2}} \Exp \int_{0}^{T} \int_{U} \norm{F^{m,k}(r,u)}^{2}_{\mathcal{L}_{2}(\Phi'_{q_{r,u}},\Psi'_{p})}  \mu(du)dr. \label{estimateDiferApproxOU}
\end{multline}
Now, the strong continuity of the $C_{0}$-semigroup $S_{p}(t)$ shows that for each $(r,\omega,u) \in [0,T] \times \Omega \times U$ we have
$$ \lim_{m,k \rightarrow \infty} \norm{F^{m,k}(r,\omega,u)}^{2}_{\mathcal{L}_{2}(\Phi'_{q_{r,u}},\Psi'_{p})} =0.$$
Therefore, from  \eqref{eqNormIntegraFmk}, dominated convergence and \eqref{estimateDiferApproxOU} it follows that
$$\lim_{m,k \rightarrow \infty} \Prob \left( \sup_{t \in [0,T]} p'\left( Y^{m}(t)-Y^{k}(t)  \right)^{2}>C  \right)=0.$$ 
Then, by standard arguments we can show the existence of a subsequence of $(Y^{k}: k \in \N)$ that converges $\Prob$-a.e. uniformly on $[0,T]$ to a c\`{a}dl\`{a}g version $(Y_{t}: t \in [0,T])$ of the process $\int^{t}_{0} \int_{U} S_{p}(t-r)' \tilde{R}(r,u) \,  M(dr,du)$, $t \in [0,T]$. Moreover, observe that by \eqref{eqItoIsometrySpacePhiq} we have
$$ \sup_{t \in [0,T]} \Exp( p'(Y_{t})^{2}) \leq e^{2\theta_{p} T} \Exp \int_{0}^{T} \int_{U} \norm{\tilde{R}(r,u)}^{2}_{\mathcal{L}_{2}(\Phi'_{q_{r,u}},\Psi'_{p})} \mu(du) dr < \infty. $$

Finally, if each $(M(t,A)(\phi): t \geq 0)$ is continuous
then the stochastic integrals defined with respect to $M$ are continuous processes (see Proposition 5.12 in \cite{FonsecaMora:2018-1}). In such a case each member of the approximation sequence $(Y^{k}: k \in \N)$ is continuous and since there exist a subsequence that converges $\Prob$-a.e. uniformly on $[0,T]$ we conclude that $\int^{t}_{0} \int_{U} S_{p}(t-r)' \tilde{R}(r,u) \,  M(dr,du)$ has a continuous version. This completes the proof of Theorem \ref{theoCadlagVerStochConv}. 
\end{proof}

\section{Time regularity for solutions to stochastic evolution equations}\label{sectTimeReguSEE}

In this section we apply our result in Theorem \ref{theoCadlagVerStochConv} to show the existence of c\`{a}dl\`{a}g solutions to the following class of stochastic evolution equations
\begin{equation}\label{generalFormSEE}
d X_{t}= (A'X_{t}+ B (t,X_{t})) dt+\int_{U} F(t,u,X_{t}) M (dt,du), \quad \mbox{for }t \geq 0,
\end{equation}
with initial condition $X_{0}=Z_{0}$, where we will assume the following: 

\begin{assumption} \label{assumptionsCoefficients} \hfill 

\noindent \textbf{(A1)} Every continuous seminorm on $\Psi'$ is separable. 

\noindent \textbf{(A2)} $Z_{0}$ is a $\Psi'$-valued, regular, $\mathcal{F}_{0}$-measurable, square-integrable random variable. 

\noindent \textbf{(A3)}  $A$ is the infinitesimal generator of a $(C_{0},1)$-semigroup $(S(t): t\geq 0)$ on $\Psi$ and the dual semigroup $(S(t)': t \geq 0)$ is a $(C_{0},1)$-semigroup. 

\noindent \textbf{(A4)} $M=(M(t,A): t \geq 0, A \in \mathcal{R})$ is a cylindrical martingale-valued measure as in Section \ref{subsecStocInteg} with $\lambda$ being the Lebesgue measure on $(\R_{+}, \mathcal{B}(\R_{+}))$.

\noindent \textbf{(A5)} $B:\R_{+} \times \Psi' \rightarrow \Psi'$ is such that the map $(r,g) \mapsto \inner{B(r,g)}{\psi}$ is $ \mathcal{B}(\R_{+}) \otimes \mathcal{B} (\Psi')$-measurable, for every $\psi \in \Psi$.

\noindent \textbf{(A6)} $F= \{F(r,u,g): r \in \R_{+}, u \in U, g \in \Psi'\}$ is such that
\begin{enumerate}[label=(\alph*)]
\item $ F(r,u,g) \in \mathcal{L} (\Phi'_{q_{r,u}}, \Psi')$, $\forall r \in \R_{+}$, $u \in U$, $g \in \Psi'$. 
\item The mapping $(r,u,g)\mapsto q_{r,u}(F(r,u,g)'\psi,\phi)$ is $ \mathcal{B}(\R_{+}) \otimes \mathcal{B}(U) \otimes \mathcal{B}(\Psi')$-measurable, for every $\phi \in \Phi$, $\psi \in \Psi$.
\end{enumerate}

\noindent \textbf{(A7)}
There exist two functions $a, b: \Psi \times \R_{+} \rightarrow \R_{+}$ satisfying:
\begin{enumerate}
\item For each $T>0$ and $K \subseteq \Psi$ bounded, 
\begin{equation*}
 \int_{0}^{T} \sup_{\psi \in K} ( a(\psi,r)^{2} +  b(\psi,r)^{2}) dr < \infty. 
\end{equation*}
\item (Growth conditions) For all $r \in \R_{+}$, $g \in \Psi'$,   
\begin{align*}
\abs{\inner{B(r,g)}{\psi}} & \leq  a(\psi,r)(1+\abs{\inner{g}{\psi}}), \\
\int_{U} q_{r,u}(F(r,u,g)'\psi)^{2} \mu(du) & \leq b(\psi,r)^{2}(1+\abs{\inner{g}{\psi}})^{2}. 
\end{align*} 
\item (Lipschitz conditions) For all $r \in \R_{+}$, $g_{1}, g_{2} \in \Psi'$,  
\begin{align*}
\abs{\inner{B(r,g_{1})}{\psi}-\inner{B(r,g_{2})}{\psi}} & \leq  a(\psi,r) \abs{\inner{g_{1}}{\psi}-\inner{g_{2}}{\psi}}, \\
\int_{U} q_{r,u}(F(r,u,g_{1})'\psi-F(r,u,g_{2})'\psi)^{2} \mu(du)  & \leq b(\psi,r)\abs{\inner{g_{1}}{\psi}-\inner{g_{2}}{\psi}}^{2}. 
\end{align*}
\end{enumerate}
\end{assumption}

It is shown in Theorem 6.23 in \cite{FonsecaMora:2018-1} that \eqref{generalFormSEE} has a unique \emph{weak solution} with initial condition $X_{0}=Z_{0}$, that is a  $\Psi'$-valued regular and predictable process $X=(X_{t}: t \geq 0)$ such that for every $\psi \in \mbox{Dom}(A)$ and $ t \geq 0$, $\Prob$-a.e.
$$
\inner{X_{t}}{\psi} = \inner{X_{0}}{\psi}+ \int^{t}_{0}\inner{X_{r}}{A \psi}+ \inner{B (r,X_{r})}{\psi}dr  +
\int^{t}_{0} \int_{U} F(r,u,X_{r})'\psi M(dr,du),
$$
where the first integral is a Lebesgue integral for $\Prob$-a.e. $\omega \in \Omega$ and the second integral is a weak stochastic integral as defined in Section 4 in \cite{FonsecaMora:2018-1}. 

This solution is also a \emph{mild solution}, i.e. for every $ t \geq 0$, $\Prob$-a.e.
\begin{equation} \label{equationMildSolution}
X_{t} = S(t)'Z_{0}+ \int^{t}_{0} S(t-r)' B(r,X_{r})dr  +  \int^{t}_{0} \int_{U} S(t-r)' F(r,u,X_{r}) M(dr,du).
\end{equation}
The first integral at the right-hand side of \eqref{equationMildSolution} is a $\Psi'$-valued regular, adapted process such that for all $t \geq 0$ and $\psi \in \Psi$, for $\Prob$-a.e. $\omega \in \Omega$,
\begin{equation} \label{defNonRandomConvolutionIntegral}
\inner{ \int^{t}_{0} S(t-r)'B(r,X_{r}(\omega))dr }{\psi}= \int^{t}_{0} \inner{S(t-r)' B(r,X_{r}(\omega))}{\psi} dr,
\end{equation}
where for each $t \geq 0$, $\psi \in \Psi$, the integral on the right-hand side of \eqref{defNonRandomConvolutionIntegral} is defined for $\Prob$-a.e. $\omega \in \Omega$ as a Lebesgue integral. The second integral at the right-hand side of \eqref{equationMildSolution} is the stochastic convolution. 

For every $T>0$, it is proved in  Theorem 6.23 in \cite{FonsecaMora:2018-1} that there exists a continuous Hilbertian seminorm $\rho=\rho(T)$ on $\Psi$ such that $X=( X_{t} : t \in [0,T])$ has a $\Psi'_{\rho}$-valued predictable version $\tilde{X}=( \tilde{X}_{t} : t \in [0,T])$ satisfying $\sup_{t \in [0,T]} \Exp \left( \rho'(\tilde{X}_{t})^{2} \right)< \infty$. 

\begin{theorem}\label{theoCadlagSolSEEwithCMVM}
Apart from Assumption \ref{assumptionsCoefficients}, suppose we have that $(S(t): t \geq 0)$ is a Hilbertian $(C_{0},1)$-semigroup on $\Psi$. Then \eqref{generalFormSEE} has a unique c\`{a}dl\`{a}g weak solution with initial condition $X_{0}=Z_{0}$. Moreover, for every $T>0$ there exists a continuous Hilbertian seminorm $\rho=\rho(T)$ on $\Psi$ such that $X=( X_{t} : t \in [0,T])$ is a $\Psi'_{\rho}$-valued adapted c\`{a}dl\`{a}g process satisfying $\sup_{t \in [0,T]} \Exp \left( \rho'(X_{t})^{2} \right)< \infty$. 

Moreover if for each $A \in \mathcal{R}$ and $\phi \in \Phi$, the real-valued process $(M(t,A)(\phi): t \geq 0)$ is continuous, then the results above remain valid replacing the property c\`{a}dl\`{a}g by continuous.
\end{theorem}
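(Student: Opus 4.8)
The plan is to reduce the claim to the path regularity of the three summands of the mild solution \eqref{equationMildSolution}. By Theorem 6.23 in \cite{FonsecaMora:2018-1}, recalled above, the unique weak solution $X$ equals the mild solution \eqref{equationMildSolution} and already possesses a $\Psi'_{\rho}$-valued predictable version $\tilde X$ with $\sup_{t\in[0,T]}\Exp(\rho'(\tilde X_t)^2)<\infty$; hence it suffices to build, for each fixed $T>0$, a single continuous Hilbertian seminorm on $\Psi$ together with a $\Psi'$-valued \cadlag version of the right-hand side of \eqref{equationMildSolution}. I would treat the semigroup term $t\mapsto S(t)'Z_0$, the deterministic convolution $t\mapsto\int_0^t S(t-r)'B(r,X_r)\,dr$, and the stochastic convolution $t\mapsto\int_0^t\int_U S(t-r)'F(r,u,X_r)\,M(dr,du)$ one at a time, find a Hilbertian seminorm and a regular version for each, and finally dominate the three seminorms by a common $\rho$.

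For the semigroup term, Assumption (A2) (regularity and square integrability of $Z_0$) yields a continuous Hilbertian seminorm $p_0\in\Pi$ and a $\Psi'_{p_0}$-valued square integrable $\hat Z_0$ with $Z_0=i'_{p_0}\hat Z_0$. Dualizing \eqref{eqDefExtenSemiGroupToBanach} gives $S(t)'i'_{p_0}=i'_{p_0}S_{p_0}(t)'$, so $S(t)'Z_0=i'_{p_0}S_{p_0}(t)'\hat Z_0$; as $(S_{p_0}(t)')$ is the adjoint $C_0$-semigroup on the Hilbert space $\Psi'_{p_0}$, the map $t\mapsto S_{p_0}(t)'\hat Z_0$ is norm continuous, and \eqref{eqDefC01Semigroup} (for $S_{p_0}$) bounds its second moment uniformly on $[0,T]$. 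For the deterministic convolution, the growth bound in (A7) on $B$ together with $\sup_t\Exp(\rho'(\tilde X_t)^2)<\infty$ lifts $r\mapsto B(r,X_r)$ to an integrand $\hat g$ with $\Exp\int_0^T p_1'(\hat g(r))^2\,dr<\infty$ for a suitable $p_1\in\Pi$; the standard fact that $t\mapsto\int_0^t S_{p_1}(t-r)'\hat g(r,\omega)\,dr$ is continuous whenever $\hat g(\cdot,\omega)\in L^1([0,T];\Psi'_{p_1})$ then gives, for $\Prob$-a.e. $\omega$, a continuous $\Psi'_{p_1}$-valued version with uniformly bounded second moments.

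For the stochastic convolution I would invoke Theorem \ref{theoCadlagVerStochConv}. The only thing to verify is that $R(r,\omega,u)\defeq F(r,u,X_r(\omega))$ belongs to $\Lambda^2_s(T)$: its $q_{r,u}$-predictability comes from (A6), while the growth condition in (A7) and $\sup_t\Exp(\rho'(\tilde X_t)^2)<\infty$ give $\Exp\int_0^T\int_U q_{r,u}(R(r,u)'\psi)^2\,\mu(du)\,dr<\infty$ for every $\psi\in\Psi$ — precisely the integrability already used in \cite{FonsecaMora:2018-1} to make the convolution in \eqref{equationMildSolution} meaningful. Theorem \ref{theoCadlagVerStochConv} then provides a continuous Hilbertian seminorm $p_2$ and a $\Psi'_{p_2}$-valued \cadlag version of this term with bounded second moments. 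I then pick a continuous Hilbertian seminorm $\rho$ on $\Psi$ with $\rho\ge p_0$, $\rho\ge p_1$ and $\rho\ge p_2$ (e.g. $\rho=(p_0^2+p_1^2+p_2^2)^{1/2}$); the continuous inclusions $i'_{p_i,\rho}:\Psi'_{p_i}\hookrightarrow\Psi'_{\rho}$ carry the three versions into $\Psi'_{\rho}$ without destroying path regularity or square integrability, and their sum is a $\Psi'_{\rho}$-valued adapted \cadlag process that is $\Prob$-a.e. equal, for each $t$, to the right-hand side of \eqref{equationMildSolution}; hence it is a version of $X$, and $\sup_{t\in[0,T]}\Exp(\rho'(X_t)^2)<\infty$ follows from the triangle inequality.

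For the continuity statement, when each $(M(t,A)(\phi):t\geq0)$ is continuous Theorem \ref{theoCadlagVerStochConv} furnishes a continuous version of the stochastic convolution; since the other two summands are continuous by construction, so is the sum. Uniqueness is inherited from the uniqueness of the weak solution in Theorem 6.23 of \cite{FonsecaMora:2018-1}: two \cadlag versions agree $\Prob$-a.e. at each fixed $t$, hence simultaneously on a countable dense subset of $[0,T]$, and right continuity makes them indistinguishable. I expect the main difficulty to be organizational rather than conceptual — namely keeping the three summands in comparable Hilbert spaces $\Psi'_{p_i}$, verifying $R\in\Lambda^2_s(T)$, and checking that the passage to the common $\rho$ preserves the bounds — together with the short but slightly delicate argument for continuity of the deterministic convolution with its moving upper limit.
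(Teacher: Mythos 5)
Your proposal is correct and follows essentially the same route as the paper: decompose the mild solution \eqref{equationMildSolution} into its three summands, obtain Hilbert-space-valued continuous versions of the semigroup term and the deterministic convolution (the paper simply cites Steps 1 and 2 of Lemma 6.24 in \cite{FonsecaMora:2018-1}, whose content your direct sketches reproduce), verify $F(r,u,X_r(\omega))\in\Lambda^{2}_{s}(T)$ and apply Theorem \ref{theoCadlagVerStochConv} to the stochastic convolution, and finally glue the three versions via a common dominating continuous Hilbertian seminorm $\rho$ and the dual inclusions $i'_{\varrho_{i},\rho}$. Your treatment of the continuity case and of uniqueness likewise matches the paper's argument.
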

\begin{proof}
We already know that \eqref{generalFormSEE} has a unique weak  solution with initial condition $X_{0}=Z_{0}$, hence to prove the result it suffices to show that for every $T>0$ each term in \eqref{equationMildSolution} has a Hilbert space-valued version satisfying the conditions in Theorem \ref{theoCadlagSolSEEwithCMVM}. 

In effect, in the proof of Lemma 6.24 in \cite{FonsecaMora:2018-1} (Step 1) it is shown the existence of a continuous Hilbertian seminorm $\varrho_{0}$ on $\Psi$ such that $(Y^{0}_{t}: t \in [0,T]) \defeq ( S(t)'Z_{0} : t \in [0,T])$ has a $\Psi'_{\varrho_{0}}$-valued continuous adapted version $(\widetilde{Y}^{0}_{t}: t \in [0,T])$ satisfying $\sup_{t \in [0,T]} \Exp \left( \varrho_{0}' ( \widetilde{Y}^{0}_{t})^{2} \right)< \infty$.

Likewise, for $(Y^{1}_{t}: t \in [0,T]) \defeq \left(  \int^{t}_{0} S(t-r)' B(r,X_{r})dr : t \in [0,T] \right)$ it is shown in  the proof of Lemma 6.24 in \cite{FonsecaMora:2018-1} (Step 2) that there exists a continuous Hilbertian seminorm $\varrho_{1}$ on $\Psi$ such that $(Y^{1}_{t}: t \in [0,T])$ has a $\Psi'_{\varrho_{1}}$-valued continuous adapted version $(\widetilde{Y}^{1}_{t}: t \in [0,T])$ satisfying $\sup_{t \in [0,T]} \Exp \left( \varrho_{1}' ( \widetilde{Y}^{1}_{t})^{2} \right)< \infty$.

Finally, for $(Y^{2}_{t}: t \in [0,T]) \defeq \left(  \int^{t}_{0} \int_{U} S(t-r)' F(r,u,X_{r}) M(dr,du) : t \in [0,T] \right)$ it is shown in Lemma 6.24 in \cite{FonsecaMora:2018-1} (Step 3) that $F_{X}= \{ F(r,u,X_{r}(\omega)):  r \in [0,T], \omega \in \Omega, u \in U \} \in \Lambda^{2}_{s}(T)$. Then, Theorem \ref{theoCadlagVerStochConv} shows that there exists a 
continuous Hilbertian seminorm $\varrho_{2}$ on $\Psi$ such that $(Y^{2}_{t}: t \in [0,T])$ has a $\Psi'_{\varrho_{2}}$-valued c\`{a}dl\`{a}g adapted version $(\widetilde{Y}^{2}_{t}: t \in [0,T])$ satisfying $\sup_{t \in [0,T]} \Exp \left( \varrho_{2}' ( \widetilde{Y}^{2}_{t})^{2} \right)< \infty$.

Let $\rho$ be a continuous Hilbertian semi-norm on $\Psi$ such that $\varrho_{i} \leq \rho$, for $i=0,1,2$. Then, the inclusions $i_{\varrho_{i},\rho}:\Psi_{\rho} \rightarrow \Psi_{\varrho_{i}}$, $i=0,1,2$ are linear and continuous. Hence, if we take 
$$Y_{t}= i'_{\varrho_{0},\rho} \widetilde{Y}^{0}_{t} + i'_{\varrho_{1},\rho}  \widetilde{Y}^{1}_{t}  + i'_{\varrho_{2},\rho}  \widetilde{Y}^{2}_{t} ,$$
we have that $(Y_{t}: t \in [0,T])$ is a $\Psi'_{\rho}$-valued adapted c\`{a}dl\`{a}g version for  $X=( X_{t} : t \in [0,T])$ for which it is true that $\sup_{t \in [0,T]} \Exp \left( \rho'(Y_{t})^{2} \right)< \infty$. 

Finally, if for each $A \in \mathcal{R}$ and $\phi \in \Phi$, the real-valued process $(M(t,A)(\phi): t \geq 0)$ is continuous, then Theorem \ref{theoCadlagVerStochConv} shows that $(\widetilde{Y}^{2}_{t}: t \in [0,T])$ has continuous paths, hence the same is true for $(Y_{t}: t \in [0,T])$. 
\end{proof}

\section{Examples and applications}\label{sectAppliExample}

\begin{example} The following is an example of the construction of a Fr\'{e}chet nuclear space and a Hilbertian $(C_{0},1)$-semigroup on it which is commonly used on the literature of stochastic analysis in duals of nuclear spaces. See \cite{KallianpurXiong}, Example 1.3.2, for full details. 

Let $(H, \inner{\cdot}{\cdot}_{H})$ be a separable Hilbert space and $-L$ be a closed densely defined self-adjoint operator on $H$ such that $\inner{-L \phi}{\phi}_{H} \leq 0$ for each $\phi \in \mbox{Dom}(L)$. Let $(S(t): t \geq 0)$ be the $C_{0}$-contraction semigroup on $H$ generated by $-L$. Assume moreover that there exists some $r_{1}$ such that $(\lambda I+ L)^{-r_{1}}$ is Hilbert-Schmidt. Given these conditions, there exist a complete orthonormal system $(\phi_{i}: i \in \N)$ in $H$ and $0 \leq \lambda_{1} \leq \lambda_{2} \leq \cdots$ such that $L \phi_{i} = \lambda_{i} \phi_{i}$ for $i=1,2,\dots$.    

Define 
$$\Psi = \left\{ \psi \in H: \sum_{j=1}^{\infty} (1+\lambda_{j})^{2r} \inner{\psi}{\phi_{j}}^{2}_{H} < \infty, \, \forall r \in \R \right\},$$
and for every $\psi \in \Psi$ and $r \in \R$, define the Hilbertian norm
$$ \abs{\psi}_{r}^{2} = \sum_{j=1}^{\infty} (1+\lambda_{j})^{2r} \inner{\psi}{\phi_{j}}^{2}_{H}. $$
It can be shown that $\Psi$ is a nuclear Fr\'{e}chet space when  equipped with the topology generated by the family  $(\abs{\cdot}_{n}: n \geq 0)$. Moreover,   $(S(t): t \geq 0)$ restricts to a equicontinuous $C_{0}$-semigroup $(S(t): t \geq 0)$ on $\Psi$, i.e. $\abs{S(t)\psi}_{n} \leq \abs{\psi}_{n}$, $n \geq 0$. Hence, $(S(t): t \geq 0)$ is a Hilbertian $(C_{0},1)$-semigroup on $\Psi$.  Furthermore, the restriction $A$ of $-L$ to $\Psi$  is the infinitesimal generator of $(S(t): t \geq 0)$ on $\Psi$ and $A \in \mathcal{L}(\Psi,\Psi)$. In particular, we have for all $\psi \in \Psi$, $t \geq 0£$, 
\begin{equation}\label{eqConstrucGeneSemiCHNS}
 A\psi = -\sum_{j=1}^{\infty} \lambda_{j} \inner{\psi}{\phi_{j}}_{H} \phi_{j}, \quad S(t) \psi = \sum_{j=1}^{\infty} e^{-t\lambda_{j}} \inner{\psi}{\phi_{j}}_{H}  \phi_{j}.
\end{equation}
As an example, let $H=L^{2}(\R)$ and $-L=\frac{d^{2}}{dx^{2}}-\frac{x^{2}}{4}$. Consider the sequence of Hermite functions $(\phi_{n}: n \in \N)$ defined as:
$$ \phi_{n+1}(x)=\sqrt{g(x)} \, h_{n}(x), \quad n=0,1,2, \dots, $$
for $\displaystyle{g(x)=(\sqrt{2\pi})^{-1} \exp(-x^{2}/2)}$ and where $(h_{n}: n=0,1,2, \dots)$ is the sequence of Hermite polynomials:
$$h_{n}(x)=\frac{(-1)^{n}}{\sqrt{n!}} g(x)^{-1} \frac{d^{n}}{dx^{n}} g(x), \quad n=0,1,2, \dots. $$
We have $\sum_{n=1}^{\infty} \norm{(I+L)^{-r}\phi_{n}}_{H}^{2}= \sum_{n=1}^{\infty} \left( n+\frac{1}{2} \right)^{-2r}< \infty$ for $r > \frac{1}{2}$. Hence the operator  $(\lambda I+ L)^{-r_{1}}$ is Hilbert-Schmidt for $r_{1} > \frac{1}{2}$. Then from the above construction one can check (see Theorem 1.3.2 in \cite{KallianpurXiong}) that $\Psi=\mathscr{S}(\R)$ and the generator $A$ and equicontinuous semigroup $S(t)$ 
can be described by \eqref{eqConstrucGeneSemiCHNS} for $\lambda_{n}=n-\frac{1}{2}$ and the Hermite functions $\phi_{n}$. See \cite{KallianpurPerezAbreu:1988, KallianpurWolpert:1984, KallianpurXiong} for other examples using the above construction. 
\end{example}

\begin{example} Let $\Phi$ be a barrelled nuclear space. 
Recall from \cite{BojdeckiJakubowski:1990} that a $\Phi'$-valued adapted continuous zero-mean Gaussian process $W=( W_{t}: t \geq 0)$ is called a \emph{generalized Wiener process} if $W_{t}-W_{s}$ is independent of $\mathcal{F}_{s}$, for $0 \leq s < t$, 
and  
\begin{equation} \label{covarianceFunctionalGeneralizedWiener}
\Exp \left( \inner{W_{t}}{\phi} \inner{W_{s}}{\varphi} \right) = \int_{0}^{t \wedge s} q_{r}(\phi,\varphi)dr , \quad \forall \, t, s \in R_{+}, \, \phi \in \Phi. 
\end{equation} 
where $\{q_{r}: r \in \R_{+}\}$ is a family of continuous Hilbertian semi-norms on $\Phi$, such that the map $r \mapsto q_{r}(\phi,\varphi)$ is Borel measurable and bounded on finite intervals, for each $\phi$, $\varphi$ in $\Phi$. 

Consider the stochastic evolution equation
\begin{equation}\label{geneWienerSEE}
d X_{t}= (A'X_{t}+ B (t,X_{t})) dt+F(t,0,X_{t}) dW_{t}, \quad \mbox{for }t \geq 0,
\end{equation}
with initial condition $X_{0}=Z_{0}$. The process $W$ induces the cylindrical martigale valued measure $M(t,A)=W_{t} \delta_{0}(A)$ where \begin{inparaenum}[(i)] \item $U=\{ 0 \}$,  $\mathcal{R}=\mathcal{B}(\{ 0 \})$ and $\mu = \delta_{0}$, and \item $q_{r,0}=q_{r}$, where $\{q_{r}: r \in \R_{+}\}$ are as in \eqref{covarianceFunctionalGeneralizedWiener}. \end{inparaenum} Hence, in comparison with \eqref{generalFormSEE} we have $\int_{0}^{t} \int_{U} F(r,u,X_{r}) M (dr,du) = \int_{0}^{t} F(r,0,X_{r}) dW_{r}$. 

Suppose that $Z_{0}$, $A$, $(S(t): t \geq 0)$, $(S(t)': t \geq 0)$, $B$ and $F$ satisfy Assumption \ref{assumptionsCoefficients} and that $(S(t): t \geq 0)$ is a Hilbertian $(C_{0},1)$-semigroup on $\Psi$. By Theorem \ref{theoCadlagSolSEEwithCMVM} we have that \eqref{geneWienerSEE} has a unique weak solution with continuous paths and initial condition $X_{0}=Z_{0}$. Moreover, for every $T>0$ there exists a continuous Hilbertian seminorm $\rho=\rho(T)$ on $\Psi$ such that $X=( X_{t} : t \in [0,T])$ is a $\Psi'_{\rho}$-valued adapted continuous process satisfying $\sup_{t \in [0,T]} \Exp \left( \rho'(X_{t})^{2} \right)< \infty$. 
\end{example}

\begin{example}
Suppose $\Phi$ is a barrelled nuclear space and consider a $\Phi'$-valued L\'{e}vy process $L=( L_{t} : t\geq 0)$ (see Section 3.2 in \cite{FonsecaMora:Levy}). 
It is shown in (\cite{FonsecaMora:Levy}, Theorem 4.17) that the L\'{e}vy process $L$ admits a \emph{L\'{e}vy-It\^{o} decomposition}, i.e.  for each $t \geq 0$, 
\begin{equation} \label{eqLevyItoDecomposition}
L_{t}=t\goth{m}+W_{t}+\int_{B_{\rho'}(1)} f \widetilde{N} (t,df)+\int_{B_{\rho'}(1)^{c}} f N (t,df).
\end{equation}
In \eqref{eqLevyItoDecomposition}, we have that $\goth{m} \in \Phi'$, $( W_{t} : t \geq 0)$ is a  $\Phi'$-valued Wiener process with zero-mean and \emph{covariance functional} $\mathcal{Q}$ (see Section 3.4 in \cite{FonsecaMora:Levy}).  
Moreover, $ N(t,A)= \sum_{0 \leq s \leq t} \ind{A}{\Delta L_{s}}$, $\forall \, t \geq 0$, $A \in \mathcal{B}( \Phi' \setminus \{ 0\})$, is the \emph{Poisson random measure} associated to $L$ with respect to the ring $\mathcal{A}$ of all the subsets of $\Phi' \setminus \{0\}$ that are \emph{bounded below} (i.e. $A \in \mathcal{A}$ if $0 \notin \overline{A}$), $\widetilde{N}(dt,df)= N(dt,df)-dt \, \nu(df)$  where $\nu$ is a \emph{L\'{e}vy measure} on $\Phi'$ (see Section 4.3 in \cite{FonsecaMora:Levy}) with a continuous Hilbertian semi-norm $\rho$ on $\Phi$ such that $\int_{B_{\rho'}(1)} \rho'(f)^{2} \nu (df) < \infty$. 
The process $\int_{B_{\rho'}(1)} f \widetilde{N} (t,df)$, $t\geq 0$, is a $\Phi'$-valued zero-mean, square integrable, c\`{a}dl\`{a}g L\'{e}vy process, and the process $\int_{B_{\rho'}(1)^{c}} f N (t,df)$  $\forall t\geq 0$ is a $\Phi'$-valued c\`{a}dl\`{a}g L\'{e}vy process defined  by means of a Poisson integral (see Section 4.1 in \cite{FonsecaMora:Levy}) with respect to the Poisson random measure $N$ of $L$ on the set $B_{\rho'}(1)^{c}$. All the random components of the representation \eqref{eqLevyItoDecomposition} are independent. 

Consider the following 
\emph{L\'{e}vy-driven stochastic evolution equation}:
\begin{multline}\label{levyDrivenSEELID}
 d X_{t}= (A'X_{t}+ B (t,X_{t})) dt+ F(t,0,X_{t}) dW_{t} \\
+ \int_{B_{\rho'}(1)} F(t,u,X_{t}) \tilde{N}(dt,du)+ \int_{B_{\rho'}(1)^{c}} F(t,u,X_{t}) N(dt,df). 
\end{multline}
Suppose that $Z_{0}$, $A$, $(S(t): t \geq 0)$, $(S(t)': t \geq 0)$, $B$ and $F$ satisfy Assumption \ref{assumptionsCoefficients}  for $U= \Phi'$, $\mu=\nu$, and with the family of continuous Hilbertian semi-norms  $\{ q_{r,u}: r \in \R_{+}, u \in \Phi' \}$ given by $q_{r,0}(\phi)= \mathcal{Q}(\phi,\phi)^{1/2}$ and $q_{r,u}(\phi) =\abs{\inner{u}{\phi}}$ if $u \neq 0$.

We know by Theorem 7.1 in \cite{FonsecaMora:2018-1} that \eqref{levyDrivenSEELID} 
has a weak and mild solution. If we further assume that  $(S(t): t \geq 0)$ is a Hilbertian $(C_{0},1)$-semigroup on $\Psi$, then if in the proof of Theorem 7.1 in \cite{FonsecaMora:2018-1} we use Theorem \ref{theoCadlagSolSEEwithCMVM} instead of Theorem 6.23 in \cite{FonsecaMora:2018-1}, we can show that \eqref{levyDrivenSEELID} has a unique c\`{a}dl\`{a}g weak solution with initial condition $X_{0}=Z_{0}$.
\end{example}

\smallskip

\noindent \textbf{Acknowledgements} The author thanks The University of Costa Rica for providing financial support through the grant ``821-C2-132- Procesos cil\'{i}ndricos y ecuaciones diferenciales estoc\'{a}sticas''.


\end{document}